\newtheorem{theorem}{Theorem}[section]
\newtheorem{lemma}[theorem]{Lemma}
\newtheorem{cor}[theorem]{Corollary}
\theoremstyle{definition}
\theoremstyle{remark}
\newtheorem{remark}[theorem]{Remark}
\numberwithin{equation}{section}
\newcommand{\loc}{\operatorname{loc}}
\newcommand{\comp}{\operatorname{comp}}
\newcommand{\lloc}{L_{\loc}}
\newcommand{\Dom}{\operatorname{Dom}}
\newcommand{\End}{\operatorname{End}}
\newcommand{\supp}{\operatorname{supp}}
\newcommand{\RE}{\operatorname{Re}}
\newcommand\RR{\mathbb{R}}
\newcommand{\mcomp}{C_{c}^{\infty}(M)}
\newcommand{\Del}{\Delta}
\newcommand\esssup{\rm ess\,\sup}
\newcommand{\vbn}{\mathcal{V}}
\newcommand{\vcomp}{C_{c}^{\infty}(\mathcal{V})}
\newcommand{\vcompr}{C_{c}^{\infty}(\mathcal{V}|_{B_{\rho}})}
\newcommand{\bo}{\nabla^{\dagger}\nabla}
\begin{document}

\title[covariant Schr\"odinger operators]{Self-adjointness of non-semibounded covariant Schr\"odinger operators on Riemannian manifolds}
\author{Ognjen Milatovic}
\address{Department of Mathematics and Statistics\\
         University of North Florida   \\
       Jacksonville, FL 32224 \\
        USA
           }
\email{omilatov@unf.edu}

\subjclass[2010]{Primary 47B25,58J50; Secondary 35P05, 60H30}

\keywords{covariant Schr\"odinger operator, non-semibounded, Riemannian manifold,
self-adjointness}
\begin{abstract}
In the context of a geodesically complete Riemannian manifold $M$, we study the self-adjointness of $\nabla^{\dagger}\nabla+V$ where $\nabla$ is a metric covariant derivative (with formal adjoint $\nabla^{\dagger}$) on a Hermitian vector bundle $\vbn$ over $M$, and $V$ is a locally square integrable section of $\End\vbn$  such that the (fiberwise) norm of the ``negative" part $V^{-}$ belongs to the local Kato class (or, more generally, local contractive Dynkin class). Instead of the lower semiboundedness hypothesis, we assume that there exists a number $\varepsilon \in [0,1]$ and a positive function $q$ on $M$ satisfying certain growth conditions, such that $\varepsilon \nabla^{\dagger}\nabla+V\geq -q$, the inequality being understood in the quadratic form sense over $\vcomp$. In the first result, which pertains to the case $\varepsilon \in [0,1)$, we use the elliptic equation method. In the second result, which pertains to the case $\varepsilon=1$, we use the hyperbolic equation method.
\end{abstract}
%% maketitle must follow the abstract.
\maketitle                   % Produces the title.

%% If there is not enough space inside the running head
%% for all authors including the title you may provide
%% the leftmark in one of the following three forms:

%% \renewcommand{\leftmark}
%% {First Author: A Short Title}

%% \renewcommand{\leftmark}
%% {First Author and Second Author: A Short Title}

%% \renewcommand{\leftmark}
%% {First Author et al.: A Short Title}

%% \tableofcontents  % Produces the table of contents.

\section{Introduction}\label{S:intro}
\subsection{Motivation of the problem} One of the central objects of quantum mechanics is a Schr\"odinger differential expression
\begin{equation}\label{E:general-op}
S_{V}:=\Delta +V,
\end{equation}
where $\Delta$ is the (non-negative) Laplacian on a Riemannian manifold $M$ with a metric $g$, and $V$ is a locally square integrable real-valued function on $M$. Let $\mcomp$ denote smooth compactly supported functions on $M$ and let $L^2(M)$ be the space of square integrable functions on $M$. In the quantum mechanical setting, a self-adjoint extension of $S_{V}|_{\mcomp}$ in $L^2(M)$ is referred to as a \emph{quantum observable} associated with $S_{V}$. In view of lemma~\ref{L:GT-lemma} (see appendix~A of this paper), a self-adjoint extension of $S_{V}|_{\mcomp}$ in $L^2(M)$ always exists, although its uniqueness is not guaranteed (without additional conditions). The \emph{essential self-adjointness} of $S_{V}|_{\mcomp}$ means that $S_{V}|_{\mcomp}$ has a unique self-adjoint extension in $L^2(M)$. Furthermore, it is known (see section VI.1.7 in~\cite{ber-book}) that a self-adjoint extension of $S_{V}|_{\mcomp}$ is unique if and only if for all $u_0\in\mcomp$, the Cauchy problem
\begin{equation}\nonumber
\frac{1}{i}\frac{\partial u(t)}{\partial t}=S_{V}u(t),\quad u(0)=u_0
\end{equation}
has a unique solution $u(t)\in L^2(M)$, for all $t\in\mathbb{R}$. (Here, the $t$-derivative is taken in the $L^2$-norm sense, and $S_{V}u$ is understood in the sense of distributions.) In this case, the corresponding quantum system is said to have the \emph{quantum completeness} or \emph{quantum confinement} property.

Switching to the classical mechanics viewpoint, on the cotangent bundle $T^*M$ (with the usual symplectic structure) we consider the Hamiltonian
\begin{equation}\label{E:hamiltonian}
h(p,x)=|p|^2+V(x),
\end{equation}
where $p\in T_{x}^*M$ and $|p|$ stands for the length of $p$ in the metric on $T_{x}^*M$ induced by $g$. In the coordinates $(x^1,\dots, x^n,p^1,\dots, p^n)$, where $\dim M=n$, the corresponding Hamiltonian system is
\begin{equation}\label{E:hamilton-sys}
\frac{d x^j}{d t}=\frac{\partial h}{\partial p^j}, \quad \frac{d p^j}{d t}=-\frac{\partial h}{\partial x^j},\quad j=1,\dots, n.
\end{equation}

Assuming that $V\in C^2(M)$, the local Hamiltonian flow corresponding to~(\ref{E:hamiltonian}) is well defined. If the solutions to~(\ref{E:hamilton-sys}) with arbitrary initial data are defined for all $t$, then the corresponding classical system is referred to as \emph{classically complete.} As seen from the examples in~\cite{rauch-reed}, even in the case $M=\mathbb{R}$ the classical and quantum completeness notions are independent. For a further discussion of the quantum and classical completeness, see~\cite{rs}.

\subsection{A brief look at the existing literature}\label{SS:cursory}
In the context of $M=\mathbb{R}^n$, the essential self-adjointness problem for $-\Delta +V$ (here, $\Delta$ is the standard Laplacian) dates back to the work of H.~Weyl in the early 1900s, while the corresponding study on Riemannian manifolds was initiated by M.~Gaffney in the early 1950s. In particular, the past twenty five years have brought quite a few developments on this topic in the context of the operator $H_{V}|_{C_{c}^{\infty}}$ in the usual $L^2$-space (square integrable functions or sections of a vector bundle) on a Riemannian manifold  $M$, where $C_{c}^{\infty}$ denotes smooth compactly supported functions (or sections), and $H_{V}$ is a Schr\"odinger-type differential expression
\begin{equation}\label{E:general-op}
H_{V}=P+V.
\end{equation}
Here, $P$ is the (magnetic) Laplacian on functions or, more generally, Bochner Laplacian (see section~\ref{SS:s-2-1} below for a description) and $V$ is a locally square integrable ($\lloc^2$) real-valued function (or a self-adjoint section of the appropriate endomorphism bundle).

The literature on the essential self-adjointness of $H_{V}|_{C_{c}^{\infty}}$ showcases two general approaches to the problem---the elliptic equation method and the hyperbolic equation method. For lower semibounded operators, the elliptic equation approach was used by the authors of~\cite{Brezis-79, Simader-78, Wien-58} in the $\mathbb{R}^{n}$ setting and in~\cite{Milatovic-03,sh} in the Riemannian manifold setting, and---for not necessarily lower semibounded operators---by the authors of~\cite{Hinz-Stolz-92,KRB-97,Leinfelder-Simader,RB-70} in the $\mathbb{R}^{n}$ setting and in~\cite{br-c,bms,lm,Oleinik-94,sh-99} in the Riemannian manifold setting. (The authors of~\cite{br-c,lm,Oleinik-94,RB-70,sh-99} assumed $V\in\lloc^{\infty}$.) For lower semibounded $H_{V}|_{C_{c}^{\infty}}$, the hyperbolic equation method was used in~\cite{Povzner-53} in the $\mathbb{R}^{n}$ setting and by the authors of~\cite{Chumak, Chernoff, GK, GP} in the Riemannian manifold setting, and---for not necessarily lower semibounded operators---by the authors of~\cite{Levitan-61,Orochko-76} in the $\mathbb{R}^{n}$ setting and in~\cite{Chernoff-77} in the Riemannian manifold setting. Very recently, in the paper~\cite{HWM-21} the authors explored the links between the essential self-adjointness of Laplace-type operators on Riemannian manifolds (and discrete graphs) and the so-called $L^2$-Liouville property (the phenomenon that the functions belonging to the kernel of the adjoint operator are constant). For additional pointers to the literature, the reader is referred to the books~\cite{ckfs,Guneysu-2016,rs} and papers~\cite{bms,Mel-Roz,Simon-2018}. (In reference to the papers mentioned above, we should add that the authors of~\cite{Chernoff,Levitan-61} studied the operators with  $V\in\lloc^{\infty}$ or more regularity, and in~\cite{Chumak} the case $V=0$ was considered.)

\subsection{Two common types of conditions on $V$}
Looking at the works on the essential self-adjointness of $H_{V}|_{C_{c}^{\infty}}$ (some are referenced in section~\ref{SS:cursory}), the assumptions on $V\in\lloc^2$ are often tailored in a way that the ``negative part" $V^{-}:=\max\{-V,0\}$ satisfies one of the following properties ($d$ is the usual differential, $\Delta$ is the Laplacian on functions on $M$, $1_{G}$ is the characteristic function of a set $G$):
\begin{enumerate}
\item [(H1)] for every compact set $K$, there exist numbers $0\leq a_{K}<1$ and $b_{K}\geq 0$ such that $(1_{K}V^{-}u,u)\leq a_{K}\|d u\|^2+b_{K}\|u\|^2$, for all $u\in C_{c}^{\infty}$;
\item [(H2)] for every compact set $K$, there exist numbers $0\leq a_{K}<1$ and $b_{K}\geq 0$ such that $\|1_{K}V^{-}u\|\leq a_{K}\|\Delta u\|+b_{K}\|u\|$, for all $u\in C_{c}^{\infty}$.
\end{enumerate}
\noindent (Here, $(\cdot,\cdot)$ and $\|\cdot\|$ stand for the inner product and the norm in $L^2$.) When considering~(\ref{E:general-op}), with $P$ being the Bochner Laplacian, one would replace $d$ by the covariant derivative $\nabla$ in (H1) and $V^{-}$ by the norm $|V^{-}|$ in (H2).

Examples of potentials satisfying (H1) and (H2) include those having $V^{-}\in\lloc^p\cap\lloc^2$ with $p$ as in Remark~\ref{R:Guneysu} below, and the examples satisfying (H1) include the potentials with $V^{-}$  (or the norm $|V^{-}|$ in the bundle case) belonging to the local Kato class $\mathcal{K}_{\loc}$ (or, more generally, local contractive Dynkin class $\mathcal{D}_{\loc}$). For a description of $\mathcal{K}_{\loc}$ and $\mathcal{D}_{\loc}$, see section~\ref{S:results-statements} below.

If the lower semiboundedness of $H_{V}|_{C_{c}^{\infty}}$ is not assumed (or is not a consequence of the assumptions on $V$), the authors usually impose a condition of the following type: there exists $\varepsilon\in[0,1]$ such that
\begin{equation}\label{E:epsilon-rb}
\varepsilon(Pu,u)+(Vu,u)\geq -(qu,u),
\end{equation}
for all $u\in C_{c}^{\infty}$, where $P$ and $V$ are as in~(\ref{E:general-op}) and $q(x)=[(\alpha\circ r)(x)]^2$ with $r(x):=d_{g}(x,x_0)$. Here, $\alpha(t)>0$ is a sufficiently regular function tending to $\infty$ not too fast as $t\to\infty$, and the notation $d_{g}(\cdot,x_0)$ indicates the distance from a reference point $x_0$ in the metric $g$ of $M$.

\subsection{A discussion of the first result}
The present paper is concerned with the operator $H_{V}=\nabla^{\dagger}\nabla+V$ on $C_{c}^{\infty}$, where $\nabla$ is a metric covariant derivative (with formal adjoint $\nabla^{\dagger}$) on a Hermitian vector bundle $\vbn$ over a geodesically complete Riemannian manifold $M$ with metric $g$, and  $V\in\lloc^2(\End\vbn)$ is a $\mathcal{D}_{\loc}$-decomposable self-adjoint operator, that is, $V=V_1-V_2$, where $V_j\geq 0$ and $|V_2|\in\mathcal{D}_{\loc}$ (here, $\mathcal{D}_{\loc}$ is the local contractive Dynkin class). As seen in the book~\cite{Guneysu-2016}, the covariant Schr\"odinger operator $\nabla^{\dagger}\nabla+V$ has drawn quite a bit of attention in recent years (in particular, as an object of study in stochastic analysis).

Using a variant of the elliptic equation method, in Theorem~\ref{T:m-1} (in particular, in Corollary~\ref{C:m-1}) of this article we prove that $H_{V}|_{\vcomp}$ is essentially self-adjoint under the following assumptions: $\mathcal{D}_{\loc}$ decomposability of $V$, the fulfillment of~(\ref{E:epsilon-rb}) with $0\leq\varepsilon<1$ and a function $q\geq 1$ (with an additional hypothesis on $q$), and the geodesic completeness of the metric $g_{q}:=q^{-1}g$. (The latter assumption implies that $(M,g)$ is geodesically complete.) We should indicate that a related result, proven in Theorem 2.7 of~\cite{bms}, assumed the fulfillment of (H2) described earlier in this section. It turns out that the condition (H2) is not necessarily satisfied by the potentials $V$ with $|V_2|\in\mathcal{D}_{\loc}$ (or $|V_2|\in\mathcal{K}_{\loc}$). Our Theorem~\ref{T:m-1} is capable of handling $\mathcal{D}_{\loc}$-decomposable potentials thanks to the ``localized self-adjointness" idea from~\cite{Brezis-79} (subsequently implemented in~\cite{KRB-97} in the non-semibounded case), which enables us to keep the proof of the inclusion $\Dom((H_{V}|_{C_{c}^{\infty}})^*)\subset W^{1,2}_{\loc}$ within the realm of quadratic-form inequalities. (Here,  $T^*$ is the operator adjoint of $T$ and $W^{1,2}_{\loc}$ is a local $L^2$-Sobolev space with differential order 1.) With the latter inclusion at our disposal, the remainder of the proof of Theorem~\ref{T:m-1} follows the lines of the refined integration by parts technique as in~\cite{br-c,bms,Oleinik-94, sh-99}.

\subsection{A discussion of the second result}
In Theorem~\ref{T:m-2} we prove that $H_{V}|_{\vcomp}$ has at most one self-adjoint extension in $L^2(\vbn)$ under the following assumptions:  geodesic completeness of $(M,g)$, $\mathcal{D}_{\loc}$ decomposability of $V$, the fulfillment of~(\ref{E:epsilon-rb}) with $\varepsilon=1$, and the requirement that $t\cdot\alpha(t)$ (with $\alpha$ coming from $q(x)=[(\alpha\circ r)(x)]^2$) belongs to the class $\mathscr{V}$ (see section~\ref{S:results-statements} for a description of this class). Examples of $\alpha$ covered by Theorem~\ref{T:m-2} include $\alpha(t)=at+b$ and  $\alpha(t)=(at+b)\ln(t+1)$, with $a>0$ and $b>0$. In the case when  $\alpha$ is constant (see Corollary~\ref{C:m-2-c1} below), we have a lower semibounded operator $H_{V}|_{C_{c}^{\infty}}$ (which, by an abstract fact, is guaranteed to have a self-adjoint extension in $L^2(\vbn)$), and Theorem~\ref{T:m-2} then tells us that $H_{V}|_{\vcomp}$ is essentially self-adjoint in $L^2(\vbn)$.

Corollary~\ref{C:m-2-c1} is a generalization of Theorem~XII.1 of~\cite{Guneysu-2016} (see also Theorem 1.1 of~\cite{GP}), where $|V_2|$ was assumed to belong to the global contractive Dynkin class (which implies lower semiboundedness of $H_{V}|_{C_{c}^{\infty}}$ by Lemma VII.4 in~\cite{{Guneysu-2016}}). We should mention that the assumptions on $V$ in Corollary~\ref{C:m-2-c1} are analogous to those of Theorem 1 in~\cite{GK}, where the authors proved (on a geodesically complete manifold), the essential self-adjointness of Schr\"odinger operators with singular (of the type as in~\cite{Hinz-Stolz-92,Leinfelder-Simader}) magnetic potential. Finally, Corollary~\ref{C:m-2-c2} considers $H_{V}=\Delta+V$ acting on functions, with the same assumptions on $V$ as in Theorem~\ref{T:m-2}, and thanks to Lemma~\ref{L:GT-lemma} below, in this case we get the essential self-adjointness of  $H_{V}|_{\mcomp}$. Corollary~\ref{C:m-2-c2} is a generalization of Corollary 2.3 in~\cite{Milatovic-18}, which assumed $V(x)\geq -[(\alpha\circ r)(x)]^2$ with $\alpha(t)=at+b$, $a>0$, $b>0$.

Unlike Corollary 2.3 in~\cite{Milatovic-18}, which relies on a variant of the elliptic equation method, Theorem~\ref{T:m-2} is obtained using the hyperbolic equation approach, similar to that used in~\cite{Orochko-76} for operators on $\mathbb{R}^{n}$. The approach uses finite propagation speed to establish the representation formula~(\ref{E:formula-or}) for $\cos(A^{1/2}t)f$, where $f\in\vcomp$ and $A$ is a self-adjoint extension
of $H_{V}|_{C_{c}^{\infty}}$ in $L^2(\vbn)$ (if such an extension exists). With this formula at our disposal, we show that, thanks to the assumption $t\cdot\alpha(t)\in \mathscr{V}$,  the function $t\mapsto(\cos(A^{1/2}t)f,f)$, $f\in\vcomp$, is uniquely representable in the class of measures $(E(d\lambda)f,f)$ induced by the spectral resolution $E(\lambda)$ of $A$, which, with the help of Lemma~\ref{L:Vul-lemma} below, leads to the uniqueness of self-adjoint extension
of $H_{V}|_{C_{c}^{\infty}}$ in $L^2(\vbn)$. This variant of the hyperbolic method has a different flavor from that of~\cite{Chernoff-77}, where an ``energy equality" (see equation (7) in~\cite{Chernoff-77}) is established, which is then used (together with the assumption $\alpha(t)=at+b$, $a>0$ and $b>0$) to establish $\dim(\ker((H_{V}|_{C_{c}^{\infty}})^{*}\pm i))=0$. (The author of~\cite{Chernoff-77} considered second-order elliptic operators in divergence form with real-valued coefficients and zero-order term $V$ satisfying $V=V_1-V_2$, $0\leq V_1\in\lloc^2(M)$ and $0\leq V_2\in\lloc^p(M)$, with $p=2$ for $n\leq 3$, $p>2$ for $n=4$ and $p=n/2$ for $n\geq 5$, where $n=\dim M$.) It is not clear if the procedure of~\cite{Chernoff-77} is applicable if we replace $\alpha(t)=at+b$ by more general functions, such as those covered by our Theorem~\ref{T:m-2}.

\subsection{Pointers to some recent papers}

Recently, the authors of~\cite{Pran-Ser-Riz-16} worked in the setting of a not necessarily geodesically complete Riemannian manifold $M$ and, using an ingenious form of the elliptic equation method, obtained, among other things, a far-reaching essential self-adjointess result (see Theorem 1 there) for the operator $\Del_{\omega}+V$, where $\Del_{\omega}$ is the scalar Laplacian with respect to a smooth measure $\omega$ on $M$ and $V\in\lloc^2(M)$. Theorem 1 in~\cite{Pran-Ser-Riz-16} is formulated in terms of the so-called effective potential, which involves the measure $\omega$ and the distance from the metric boundary of $M$. In the geodesically complete case, the requirement imposed on $V$ in Theorem 1 of~\cite{Pran-Ser-Riz-16} reduces to $V(x)\geq -(h(x))^2$, where $h(x)$ is a Lipschitz function on $M$. For the case of $H_{V}=\Delta+V$, where $\Delta$ is the Laplacian on functions on a geodesically complete Riemannian manifold, Corollary~\ref{C:m-2-c2} of our article handles some potentials $V$ that are not covered by Theorem 1 of~\cite{Pran-Ser-Riz-16}. Finally, we mention the recent paper~\cite{FPR-20}, which contains  various essential self-adjointness results for sub-Laplacian-type operators on sub-Riemannian manifolds.

\subsection{Organization of our paper}
Our article consists of six sections and an appendix. In section~\ref{S:res} we give an overview of the notations and state the main results. Section~\ref{S:prelim-t-1} contains some preliminary lemmas. The proofs of Theorem~\ref{T:m-1} and Corollary~\ref{C:m-1} are carried out in section~\ref{S:m-1} and section~\ref{S:C-1} respectively. The proof of Theorem~\ref{T:m-2} is given in section~\ref{S:m-2}. Lastly, in the appendix we recall some auxiliary results.

\section{Results}\label{S:res}
\subsection{Notations}\label{SS:s-2-1} We start with an overview of the notations used in the paper. We will work in the context of a smooth connected Riemannian manifold $M$ (without boundary) equipped with
the volume measure $d\mu$. The symbol $\vbn \to M$ denotes a smooth Hermitian vector bundle over $M$ and $\langle\cdot, \cdot\rangle_{x}$ indicates the corresponding Hermitian structure with the norms $|\cdot|_{x}$ on fibers $\vbn_{x}$. When there is no confusion, the notation $|\cdot|$ indicates the norm of a linear operator $V(x)\colon\vbn_{x}\to\vbn_{x}$. For a fixed $x_0\in M$, we denote
\begin{equation}\label{E:dist-x-0}
r(x):=d_{g}(x_0,x),\qquad x\in M,
\end{equation}
and
\begin{equation}\label{E:b-def-r}
B_{\rho}(x_0):=\{x\in M\colon r(x)<\rho\},
\end{equation}
where $\rho>0$ is a number and $d_{g}(\cdot,\cdot)$ is the distance relative to the metric $g$ on $M$.

We now move over to function spaces. We begin with the symbols $C^{\infty}(\vbn)$ and $\vcomp$, which refer to the smooth sections of $\vbn$ and smooth compactly supported sections of $\vbn$, respectively. The space of square integrable sections of $\vbn$ is denoted by $L^2(\mathcal{V})$ and the usual inner product and the corresponding norm are indicated, respectively, by $(\cdot,\cdot)$ and $\|\cdot\|$. The symbol $W^{k,p}_{\loc}(\vbn)$ stands for the local Sobolev space of sections of $\vbn$, where $k$ and $p$  indicate, respectively, the highest derivative and the corresponding $L^{p}$-space. Specializing to $\vbn=M\times\mathbb{C}$,  we have the following notations: $C^{\infty}(M)$, $\mcomp$, $L^2(M)$, and $W^{k,p}_{\loc}(M)$. Lastly, the symbol $\Omega^1(M)$ indicates smooth 1-forms on $M$.

We now turn to differential operators. We start with a Hermitian covariant derivative on $\vbn$
\begin{equation*}
\nabla\colon C^{\infty}(\vbn)\to C^{\infty}(T^*M\otimes\vbn)
\end{equation*}
and its formal adjoint ${\nabla}^{\dagger}$ with respect to $(\cdot,\cdot)$. The composition 
\begin{equation*}
{\nabla}^{\dagger}\nabla\colon C^{\infty}(\vbn)\to C^{\infty}(\vbn)
\end{equation*}
is often called \emph{Bochner Laplacian}. As a special case, we get the \emph{magnetic Laplacian} on functions $\Delta_{A}:=d_{A}^{\dagger}d_{A}$. Here, the symbol $d_{A}$ denotes the \emph{magnetic differential}
\[
d_{A}u=du+iuA,
\]
where $d\colon C^{\infty}(M)\to \Omega^{1}(M)$ indicates the standard differential, $A\in\Omega^{1}(M)$ stands for a real-valued one-form, and $i$ is the imaginary unit. Specializing to the case $A=0$, we get the (non-negative) Laplace--Beltrami operator $\Delta=d^{\dagger}d$. In this paper, we will be looking at a perturbed Bochner Laplacian
\begin{equation}\label{E:def-H}
H_{V}:={\nabla}^{\dagger}\nabla +V,
\end{equation}
where $V$ is a measurable section $V$ of the bundle $\End \vbn$ and $V(x)\colon\vbn_x\to\vbn_x$ is a self-adjoint operator for almost every $x\in M$. In recent literature (see, for instance, the book~\cite{Guneysu-2016}), the operator $H_{V}$ is called \emph{covariant Schr\"odinger operator}.

\subsection{Statements of results}\label{S:results-statements}

Before stating the first result, we describe a class of minorizing functions for $V$. We say that a function $q\colon M\to \mathbb{R}$ belongs to the class  $\mathscr{M}$ if
\begin{enumerate}
\item [(i)] $q\in\lloc^{\infty}(M)$ and $q\geq 1$;

\item [(ii)] there exists a number $L$ such that $|q^{-1/2}(x)-q^{-1/2}(y)|\leq Ld_{g}(x,y)$, for all $x$, $y\in M$,
where  $d_{g}(\cdot,\cdot)$ is as in~(\ref{E:dist-x-0});

\item [(iii)] $(M, g_{q})$ is geodesically complete, where $g_{q}$ is a metric on $M$ defined as $g_{q}:=q^{-1}g$.
\end{enumerate}

\begin{remark}\label{R:gc-q}
Item (iii) is equivalent to the following condition: for every curve $\gamma$ going to infinity, we have $\int_{\gamma}q^{-1/2}\,ds_{g}=\infty$, where $ds_{g}$ is the arc length element with respect to the metric $g$. Since $q\geq 1$, the property (iii) implies the geodesic completeness of $(M,g)$.

We can also look at $(M,g_{q})$ in (iii) as a manifold which is conformally equivalent to $(M,g)$ with a conformal factor $\tau=q^{-1/2}$ in $g_{q}=\tau^2g=q^{-1}g$, which ``shrinks" the original manifold (because $0<\tau\leq 1$), with the ``shrunken" manifold still being complete with respect to $g_{q}$.
\end{remark}

\begin{theorem}\label{T:m-1} Let $(M, g)$ be a Riemannian manifold and let $\vbn$ be a Hermitian vector bundle over $M$ with a Hermitian covariant derivative $\nabla$. Let $V=V_1-V_2$ with $0\leq V_j\in\lloc^2(\End\vbn)$, $j=1,2$.  Assume that for every $x_0\in M$ there exist numbers $0<\delta\leq 1$, $C\in\mathbb{R}$, and a function $\phi\in\mcomp$ with $0\leq \phi\leq 1$ and $\phi\equiv 1$ on a neighborhood of $x_0$, such that the following properties are satisfied:
\begin{enumerate}
\item [(i)] $(\nabla^{\dagger}\nabla v +\phi V v,v)\geq \delta [(\nabla^{\dagger}\nabla v+\phi V_1v,v)]-C\|v\|^2$,
for all $v\in\vcomp$;
\item [(ii)] $(\nabla^{\dagger}\nabla+\phi V) |_{\vcomp}$ is essentially self-adjoint.
\end{enumerate}
Furthermore, assume that there exists a number $0\leq \varepsilon<1$ and a real-valued function $q\in\mathscr{M}$ such that
\begin{equation}\label{E:hyp-minorant-1}
\varepsilon \|\nabla u\|^2 +(Vu,u)\geq -(qu,u),
\end{equation}
for all $u\in\vcomp$. Then the operator $H_{V}|_{\vcomp}$ is essentially-self-adjoint.
\end{theorem}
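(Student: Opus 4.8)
The plan is to establish essential self-adjointness by proving that the maximal operator $T^{*}:=(H_{V}|_{\vcomp})^{*}$ is \emph{symmetric}. Here $u\in\Dom(T^{*})$ with $T^{*}u=f$ means $u,f\in L^{2}(\vbn)$ and $\nabla^{\dagger}\nabla u+Vu=f$ distributionally. Since one always has $\overline{H_{V}|_{\vcomp}}\subseteq T^{*}=(\overline{H_{V}|_{\vcomp}})^{*}$, symmetry of $T^{*}$ (i.e.\ $T^{*}\subseteq(T^{*})^{*}=\overline{H_{V}|_{\vcomp}}$) forces $T^{*}=\overline{H_{V}|_{\vcomp}}$, so the closure is self-adjoint. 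As the numerical range of an operator is real iff the operator is symmetric, it suffices to show $\IM(T^{*}u,u)=0$ for every $u\in\Dom(T^{*})$.

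The first ingredient is the inclusion $\Dom(T^{*})\subseteq W^{1,2}_{\loc}(\vbn)$, which makes $\nabla u$ a locally square-integrable object and legitimizes the integrations by parts below; this is exactly where hypotheses (i) and (ii) enter, through the localized self-adjointness mechanism. Fixing $x_{0}$ and the associated $\phi,\delta,C$, hypothesis (ii) makes $\widehat{H}:=\overline{(\nabla^{\dagger}\nabla+\phi V)|_{\vcomp}}$ self-adjoint, while (i) and $\phi V_{1}\geq0$ give the coercive form bound $(\widehat{H}v,v)\geq\delta\|\nabla v\|^{2}-C\|v\|^{2}$, so $\Dom(\widehat{H})\subseteq W^{1,2}(\vbn)$. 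Since $\phi\equiv1$ near $x_{0}$ one has $(\nabla^{\dagger}\nabla+\phi V)u=f-(1-\phi)Vu$, which equals $f\in L^{2}$ on the neighbourhood $\{\phi=1\}$; a localization/mollification argument kept entirely inside the quadratic-form inequality (i) (absorbing the first-order commutators by the form-coercivity rather than by pointwise control of $V$) then yields $u\in W^{1,2}$ near $x_{0}$, and hence $u\in W^{1,2}_{\loc}(\vbn)$ since $x_{0}$ is arbitrary.

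The heart of the proof is the weighted a priori estimate $\int_{M}q^{-1}|\nabla u|^{2}\,d\mu<\infty$. Because $q\in\mathscr{M}$, the metric $g_{q}=q^{-1}g$ is complete, so its distance $r_{q}(x)=d_{g}$-analogue $d_{g_{q}}(x_{0},x)$ is proper and $1$-Lipschitz for $g_{q}$; fixing $\chi\in C^{\infty}(\RR)$ with $\chi\equiv1$ on $(-\infty,1]$, $\chi\equiv0$ on $[2,\infty)$, $|\chi'|\le2$, I set $\phi_{j}:=\chi(r_{q}/j)$ (a Lipschitz, hence admissible, test section). Then $\phi_{j}\uparrow1$, and since $|d r_{q}|_{g}^{2}=q^{-1}|d r_{q}|_{g_{q}}^{2}\le q^{-1}$ one gets the decisive bound $|d\phi_{j}|_{g}\le\frac{2}{j}q^{-1/2}$. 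I now test the equation against $q^{-1}\phi_{j}^{2}u\in W^{1,2}_{\comp}(\vbn)$, producing $\RE(f,q^{-1}\phi_{j}^{2}u)=\int q^{-1}\phi_{j}^{2}|\nabla u|^{2}+\RE(\text{cross terms})+(Vu,q^{-1}\phi_{j}^{2}u)$, where the cross terms collect $d(q^{-1})$ and $d\phi_{j}$. The key move is to bound $(Vu,q^{-1}\phi_{j}^{2}u)=(Vs,s)$ below by feeding $s:=q^{-1/2}\phi_{j}u$ into~(\ref{E:hyp-minorant-1}): this gives $(Vs,s)\ge-\varepsilon\|\nabla s\|^{2}-(qs,s)$, and—crucially—$(qs,s)=\int\phi_{j}^{2}|u|^{2}\le\|u\|^{2}$ is \emph{uniformly bounded}, because the weight $q^{-1}$ exactly cancels the minorant's weight $q$. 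Expanding $\|\nabla s\|^{2}=\int q^{-1}\phi_{j}^{2}|\nabla u|^{2}+\cdots$ and using $q\ge1$ together with $|d(q^{-1/2})|_{g}\le L$ and the cutoff bound, every cross term is estimated by Cauchy--Schwarz and Young as $\eta\int q^{-1}\phi_{j}^{2}|\nabla u|^{2}+(\text{const})\|u\|^{2}$. Since $\varepsilon<1$, a small $\eta$ absorbs these (and the $\varepsilon\|\nabla s\|^{2}$ term) on the left, giving $\int q^{-1}\phi_{j}^{2}|\nabla u|^{2}\le\frac{C'}{1-\varepsilon}\big(\|f\|\,\|u\|+\|u\|^{2}\big)$ uniformly in $j$; monotone convergence as $j\to\infty$ proves the claim.

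Finally, integrating by parts against the real cutoff $\phi_{j}^{2}$ gives $\IM(f,\phi_{j}^{2}u)=2\IM\int\phi_{j}\langle\nabla u,d\phi_{j}\otimes u\rangle$ (the terms $\int\phi_{j}^{2}|\nabla u|^{2}$ and $(Vu,\phi_{j}^{2}u)=\int\phi_{j}^{2}\langle Vu,u\rangle$ are real since $V$ is pointwise self-adjoint), while $\IM(f,\phi_{j}^{2}u)\to\IM(T^{*}u,u)$. Using $|d\phi_{j}|_{g}\le\frac{2}{j}q^{-1/2}$ and then Cauchy--Schwarz against the weighted energy, $\big|2\IM\int\phi_{j}\langle\nabla u,d\phi_{j}\otimes u\rangle\big|\le\frac{4}{j}\big(\int q^{-1}\phi_{j}^{2}|\nabla u|^{2}\big)^{1/2}\|u\|\le\frac{C''}{j}\to0$, so $\IM(T^{*}u,u)=0$ and $T^{*}$ is symmetric, completing the proof. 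I expect the main obstacle to be precisely this a priori estimate: the naive test section $\phi_{j}^{2}u$ yields the term $\int q\,\phi_{j}^{2}|u|^{2}$, which need not be finite and cannot be tamed on the $g_{q}$-annuli; the remedy is the weight $q^{-1}$, tuned to the minorant's weight $q$, which together with $\varepsilon<1$ closes the estimate. The inclusion $\Dom(T^{*})\subseteq W^{1,2}_{\loc}$ for the rough $\lloc^{2}$-potential is the other delicate point, since it must be run through the form-inequality (i) rather than through pointwise bounds on $V$.
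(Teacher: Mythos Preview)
Your proposal is correct and follows essentially the same route as the paper: prove $\Dom(T^{*})\subset W^{1,2}_{\loc}$ via the localized self-adjointness hypotheses (i)--(ii), derive the weighted energy bound $q^{-1/2}\nabla u\in L^{2}$ by testing with a $g_{q}$-cutoff times $q^{-1/2}$ and invoking the minorant~(\ref{E:hyp-minorant-1}) (the cancellation $q\cdot q^{-1}$ you highlight is exactly the mechanism), and conclude symmetry of $T^{*}$ from $|d\phi_{j}|_{g}\lesssim j^{-1}q^{-1/2}$.

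Two places deserve a word of care. First, you feed $s=q^{-1/2}\phi_{j}u$ into~(\ref{E:hyp-minorant-1}), but that inequality is stated only for $\vcomp$; the paper spends a separate lemma (density of $\vcomp$ in the form domain $\Dom(h_{\nabla})\cap\Dom(h_{V_{1}})$ plus Fatou) to extend it to Lipschitz compactly supported multiples of $u\in\Dom(T^{*})$. Second, for the $W^{1,2}_{\loc}$ inclusion the paper does not write $(\nabla^{\dagger}\nabla+\phi V)u=f-(1-\phi)Vu$ (the right side is not known to be locally integrable a priori), but rather multiplies by a further cutoff $\chi$ with $\supp\chi\subset\{\phi=1\}$, observes that $(\nabla^{\dagger}\nabla+\phi V)(\chi u)\in W^{-1,2}$, and then runs a Lax--Milgram argument against the coercive form coming from (i). Your sketch points to the same mechanism but would need this packaging. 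The final symmetry step in the paper is carried out for two sections $u,v\in\Dom(T^{*})$ rather than via $\IM(T^{*}u,u)=0$; the two are equivalent by polarization, so this is only a cosmetic difference.
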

\begin{remark} In the context of $M=\mathbb{R}^n$, the quadratic form condition~(\ref{E:hyp-minorant-1}), which is less restrictive than the pointwise condition $V(x)\geq -q(x)$, dates back to the paper~\cite{RB-70}. It turns out that some restrictions (as described in remark~\ref{R:gc-q}) on the function $q$ in~(\ref{E:hyp-minorant-1}) are needed. As an illustration, we recall the example III.1.1 from~\cite{berez-sh-book}:  if $M=\mathbb{R}^n$ and $V=-|x|^{\kappa}$, where $\kappa>2$ and $|x|$ is the Euclidean distance from $x$ to $0$ in $\mathbb{R}^n$, then $-\Delta +V$ is \emph{not} essentially self-adjoint on $C_{c}^{\infty}(\mathbb{R}^n)$. (Here, $\Delta$ is the standard Laplacian on $\mathbb{R}^n$.)
\end{remark}
\begin{remark}\label{R:gc-hamilt}
With regard to the function $q\in\mathscr{M}$ in~(\ref{E:hyp-minorant-1}), it was pointed out in remark 1.2 of~\cite{sh-99} that the condition (iii) from the definition of $\mathscr{M}$ (see also remark~\ref{R:gc-q}) implies the classical completeness of the system~(\ref{E:hamilton-sys}) corresponding to the Hamiltonian $|p|^2-q$. (Here we assume, in addition, that $q\in C^2(M)$.) To quickly illustrate this in the case $M=\mathbb{R}$ with the standard metric, we recall the ``conservation of energy" property along the classical trajectories of the Hamiltonian:
\begin{equation}\nonumber
|p|^2-q(x)=C=\textrm{const}.
\end{equation}
Subsequently, denoting $\dot{x}:=\frac{dx}{dt}$, and referring to the ``conservation of energy" property, we get
\begin{equation}\nonumber
dt=\frac{ds}{\dot{x}}=\frac{ds}{2|p|}=\frac{ds}{2\sqrt{C+q(x)}},
\end{equation}
where $ds$ is the arc-length element.

Thus, the condition (iii) from the definition of $\mathscr{M}$ implies the classical completeness of the Hamiltonian system corresponding to $|p|^2-q$.
\end{remark}

Before stating a corollary containing more specific conditions on $V$ that guarantee the essential self-adjointness of $H_{V}$, we review the concept of (local) Kato and (local) contractive Dynkin classes.  We start with the symbol $p(t,x,y)$, $(t,x,y)\in (0,\infty)\times M\times M$, which indicates the minimal positive heat kernel of $M$ as in Theorem 7.13 in~\cite{Grigoryan-11}. In this paper, $p(t,x,y)$ corresponds to $e^{-t\Delta}$. We can now give a description of the (local) contractive Dynkin class and (local) Kato class. For a Borel function $f\colon M\to\mathbb{C}$ and $t>0$, define
\[
J(t):=\sup_{x\in M}\int_{0}^{t}\int_{M}p(s,x,y)|f(y)|\,d\mu(y)\,ds.
\]
Adopting the terminology of Definition VI.1 in~\cite{Guneysu-2016}, we say that a Borel function $f\colon M\to\mathbb{C}$ is a member of the \emph{contractive Dynkin class} relative to $p(t,x,y)$ and write $f\in \mathcal{D}(M)$ if there exists $t>0$ such that $J(t)<1$. We say that a Borel function $f\colon M\to\mathbb{C}$ is a member of the \emph{Kato class} relative to $p(t,x,y)$ and write $f\in \mathcal{K}(M)$ if $\displaystyle\lim_{t\to 0+}J(t)=0$. The local contractive Dynkin class $\mathcal{D}_{\loc}(M)$ consists of all Borel functions $f\colon M\to \mathbb{C}$ such that for all compact sets $K\subset M$ we have $1_{K}f\in \mathcal{D}(M)$, where $1_{G}$ is the indicator function of a set $G$. The local Kato class $\mathcal{K}_{\loc}(M)$ is defined in the same way as $\mathcal{D}_{\loc}(M)$, with $\mathcal{K}(M)$ in place of  $\mathcal{D}(M)$. We observe that $\mathcal{K}(M)\subset \mathcal{D}(M)$ and $\mathcal{K}_{\loc}(M)\subset \mathcal{D}_{\loc}(M)$.

\begin{remark}\label{R:Guneysu}  Recently, the author of~\cite{Guneysu-2017} showed (see Corollary 2.11 there) that the following property holds on an arbitrary (not necessarily geodesically complete) Riemannian manifold $M$ with $\dim M=n$: for every $1\leq  p<\infty$ such that $p\geq 1$ if $n=1$, and $p>n/2$ if $n\geq 2$, we have $\lloc^{p}(M)\subset \mathcal{K}_{\loc}(M)\subset \mathcal{D}_{\loc}(M)$. To get the analogous global inclusion $L^p(M)\subset \mathcal{K}(M)$, one needs to impose additional requirements on $M$, such as geodesic completeness, positive injectivity radius, and lower semiboundedness of the Ricci curvature; see Theorem 2.9 and Corollary 2.11 in~\cite{Guneysu-2014-ams}.
\end{remark}

We are ready to describe a class of potentials $V$ used in the next corollary (and later in the article). Following the terminology of Definition VII.3 of~\cite{Guneysu-2016}, a section $V$ of $\End\vbn$ will be called \emph{$\mathcal{D}_{\loc}$-decomposable} if $V=V_1-V_2$ with $V_j\in\lloc^2(\End\vbn)$ and $V_j\geq 0$ and $|V_2|\in \mathcal{D}_{\loc}(M)$. If the latter condition is replaced by $|V_2|\in \mathcal{D}(M)$, we say that a section $V$ of $\End\vbn$ is \emph{$\mathcal{D}$-decomposable}. Analogously, we can define the terms \emph{$\mathcal{K}_{\loc}$-decomposable} and \emph{$\mathcal{K}$-decomposable}.

\begin{cor}\label{C:m-1} Let $(M, g)$ be a Riemannian manifold and let $\vbn$ be a Hermitian vector bundle over $M$ with a Hermitian covariant derivative $\nabla$. Assume that $V\in\lloc^{2}(\End \vbn)$ is $\mathcal{D}_{\loc}$-decomposable. Furthermore, assume that there exists a number $0\leq \varepsilon<1$ and a real-valued function $q\in\mathscr{M}$ such that~(\ref{E:hyp-minorant-1}) is satisfied.
Then the operator $H_{V}|_{\vcomp}$ is essentially-self-adjoint.
\end{cor}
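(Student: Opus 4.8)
The plan is to derive Corollary~\ref{C:m-1} directly from Theorem~\ref{T:m-1} by checking that $\mathcal{D}_{\loc}$-decomposability forces the two localized hypotheses (i) and (ii) to hold. The decomposition $V=V_1-V_2$ with $0\le V_j\in\lloc^2(\End\vbn)$ and the minorizing inequality~(\ref{E:hyp-minorant-1}) are already part of the corollary's assumptions, so the whole task reduces to producing, for each $x_0\in M$, a cutoff $\phi$ together with constants $\delta$ and $C$ realizing (i), and to verifying the essential self-adjointness (ii) of the localized operator. Fix $x_0$, pick $\phi\in\mcomp$ with $0\le\phi\le1$ and $\phi\equiv1$ near $x_0$, and set $K:=\supp\phi$.

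The key step is a quadratic-form bound for the negative part. Since $|V_2|\in\mathcal{D}_{\loc}(M)$, the truncation $1_{K}|V_2|$ lies in the contractive Dynkin class $\mathcal{D}(M)$, so $J(t_0)<1$ for some $t_0>0$. I would convert this into a relative form bound: there exist $a\in[0,1)$ and $b\ge0$ such that the scalar estimate $\int_{M}1_{K}|V_2|\,|f|^2\,d\mu\le a\|df\|^2+b\|f\|^2$ holds for all $f\in\mcomp$, the point being that the \emph{contractive} condition $J(t_0)<1$ is exactly what makes the relative bound $a$ strictly less than $1$. Applying this to $f=|v|$ for $v\in\vcomp$ and invoking Kato's (diamagnetic) inequality $|\,d|v|\,|\le|\nabla v|$ almost everywhere, together with $0\le\phi\le 1_K$, I obtain
\begin{equation}\nonumber
(\phi V_2 v,v)\le(\phi|V_2|v,v)\le a\|\nabla v\|^2+b\|v\|^2,\qquad v\in\vcomp.
\end{equation}
Since $(\nabla^{\dagger}\nabla v,v)=\|\nabla v\|^2$ for $v\in\vcomp$ and $\phi V_1\ge0$, this yields
\[
(\nabla^{\dagger}\nabla v+\phi Vv,v)\ge(1-a)\|\nabla v\|^2+(\phi V_1v,v)-b\|v\|^2\ge(1-a)\big[(\nabla^{\dagger}\nabla v,v)+(\phi V_1v,v)\big]-b\|v\|^2,
\]
which is precisely (i) with $\delta:=1-a\in(0,1]$ and $C:=b$.

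For hypothesis (ii), I observe that the localized potential $\phi V=\phi V_1-\phi V_2$ is globally $\mathcal{D}$-decomposable: $\phi V_1$ is a non-negative, compactly supported element of $\lloc^2(\End\vbn)$, while $|\phi V_2|=\phi|V_2|\le1_{K}|V_2|\in\mathcal{D}(M)$ shows $|\phi V_2|$ belongs to the global contractive Dynkin class. Moreover, since $q\in\mathscr{M}$, remark~\ref{R:gc-q} guarantees that $(M,g)$ is geodesically complete. Consequently, Theorem~XII.1 of~\cite{Guneysu-2016} applies to $\nabla^{\dagger}\nabla+\phi V$ on the complete manifold $(M,g)$ and yields the essential self-adjointness of $(\nabla^{\dagger}\nabla+\phi V)|_{\vcomp}$, which is exactly (ii). With (i), (ii), and~(\ref{E:hyp-minorant-1}) all in hand, Theorem~\ref{T:m-1} gives the essential self-adjointness of $H_{V}|_{\vcomp}$.

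The main obstacle is the form-bound step: one must know that membership in the \emph{contractive} Dynkin class (as opposed to the merely finite Dynkin class) produces a relative bound $a<1$ rather than $a\le1$, and one must transfer the scalar form bound to sections through the diamagnetic inequality without losing this strict inequality. This is the only genuinely analytic ingredient; once it is secured, the derivation of (i) is an elementary rearrangement, and (ii) reduces to an appeal to the existing complete-manifold result for lower-semibounded, globally $\mathcal{D}$-decomposable covariant Schr\"odinger operators.
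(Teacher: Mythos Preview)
Your proposal is correct and follows essentially the same approach as the paper: both proofs verify hypotheses (i) and (ii) of Theorem~\ref{T:m-1} by using that $\phi|V_2|$ (or $1_K|V_2|$) lies in the global contractive Dynkin class, obtaining (ii) from Theorem~XII.1 of~\cite{Guneysu-2016} and (i) from the relative form bound with constant $a<1$. The only cosmetic difference is that the paper cites Lemma~VII.4 of~\cite{Guneysu-2016} directly for the form bound $h_{\phi V_2}[v,v]\le a\,h_\nabla[v,v]+C\|v\|^2$ on sections, whereas you unpack this step via the scalar form bound plus Kato's diamagnetic inequality; just be aware that $|v|$ is only Lipschitz, so the scalar bound must first be extended from $\mcomp$ to $W^{1,2}$ by density before you plug in $f=|v|$.
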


Before stating a result concerning the case $\varepsilon=1$ in~(\ref{E:hyp-minorant-1}), we describe the class $\mathscr{V}$ originally introduced by the author of~\cite{Vul-59}. A function $f\colon (0,\infty)\to (0,\infty)$ is said to belong to the class $\mathscr{V}$ if the following conditions are satisfied: $f\in C^1(0,\infty)$; the first derivative $f'$ is increasing on $(0,\infty)$; $\displaystyle\lim_{t\to\infty}\frac{tf'(t)}{f(t)}=\gamma$, where $\gamma$ is finite and $\gamma>1$; there exists $t_0>0$ such that $\int_{t_0}^{\infty}\frac{g(t)}{t^2}\,dt=\infty$, where $g(t)$ is the inverse function corresponding to $f'(t)$. It is easy to check that if $f\in\mathscr{V}$, then $f(t+a)$ and $f(at)$, where $a>0$ is a number, also belong to $\mathscr{V}$. Furthermore, it can be checked that if $m_1(t)=a+bt$ and $m_2(t)=(a+bt)\ln(t+1)$, where $a>0$ and $b>0$ are numbers and $\ln(\cdot)$ is the natural logarithm, then $f_{j}(t):=tm_{j}(t)$, $j=1,2$, belong to $\mathscr{V}$.  We are now ready to state the second result.

%It turns out that the class $\mathscr{V}$ is useful in ensuring the uniqueness of a certain type of integral representation of functions (see the result of~\cite{Vul-59} which we recalled in Lemma~\ref{L:Vul-lemma} below). This uniqueness, in turn, guarantees the uniqueness of self-adjoint extension of $H_{V}|_{\vcomp}$ in $L^2(\vbn)$.

\begin{theorem}\label{T:m-2} Let $(M,g)$ be a geodesically complete Riemannian manifold. Let $\vbn$ be a Hermitian vector bundle over $M$ with a Hermitian covariant derivative $\nabla$. Assume that $V\in\lloc^{2}(\End \vbn)$ is $\mathcal{D}_{\loc}$-decomposable. Furthermore, assume that \begin{equation}\label{E:hyp-minorant-2}
(\nabla^{\dagger}\nabla u+Vu,u)\geq -(qu,u),
\end{equation}
for all $u\in\vcomp$, where $q(x)=[(\alpha\circ r)(x)]^2$ with a function $\alpha\colon (0,\infty)\to (0,\infty)$ such that $t\cdot\alpha(t)$ belongs to the class $\mathscr{V}$ and with $r(x)$ as in~(\ref{E:dist-x-0}). Then, one of the following is true: either $H_{V}|_{\vcomp}$ does not have a self-adjoint extension in $L^2(\vbn)$ or the closure $\overline{H_{V}|_{\vcomp}}$ is the only self-adjoint extension of $H_{V}|_{\vcomp}$ in $L^2(\vbn)$ (that is, $H_{V}|_{\vcomp}$ is essentially self-adjoint).
\end{theorem}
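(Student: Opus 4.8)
The plan is to run the hyperbolic (finite-propagation-speed) method in the spirit of~\cite{Orochko-76}, reducing the uniqueness of the self-adjoint extension to a quasi-analyticity (determinacy) statement for a measure. First I would dispose of the trivial alternative: if $H_V|_{\vcomp}$ admits no self-adjoint extension in $L^2(\vbn)$, there is nothing to prove. So assume a self-adjoint extension $A$ exists, and let $E(\cdot)$ be its spectral resolution. It suffices to show that for every $f\in\vcomp$ the measure $\sigma_f(\cdot):=(E(\cdot)f,f)$ is determined by $H_V|_{\vcomp}$ alone, independently of the choice of $A$: by polarization this forces $E$, hence $A$, to be the unique self-adjoint extension, which is exactly essential self-adjointness. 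The carrier of the information is the cosine propagator $u(t):=\cos(tA^{1/2})f$, defined through the functional calculus by $\lambda\mapsto\cos(\sqrt\lambda\,t)$, read as $\cosh(\sqrt{-\lambda}\,t)$ on the negative spectrum, so that $A$ need not be semibounded; one then has $(\cos(tA^{1/2})f,f)=\int_{\RR}\cos(\sqrt\lambda\,t)\,d\sigma_f(\lambda)$.

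The first substantive step is finite propagation speed: $u(t)$ solves the abstract wave equation $u''(t)+Au(t)=0$ with $u(0)=f$, $u'(0)=0$, and if $\supp f\subset B_{\rho}(x_0)$ then $\supp u(t)\subset\overline{B_{\rho+|t|}(x_0)}$. This rests only on the unit propagation speed of the principal symbol of $\nabla^{\dagger}\nabla$, on the geodesic completeness of $(M,g)$ (so that metric balls are the correct light cones), and on the local $W^{1,2}_{\loc}$-regularity that the $\mathcal{D}_{\loc}$-decomposability of $V$ guarantees for elements of $\Dom(A)$. The outcome is the representation formula~(\ref{E:formula-or}) for $\cos(tA^{1/2})f$ in terms of local wave data. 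The second step is the growth estimate that ties the geometry to the class $\mathscr{V}$. Using conservation of the energy $\|u'(t)\|^2+(Au(t),u(t))$ together with the hypothesis~(\ref{E:hyp-minorant-2}), namely $(Au,u)\geq-(qu,u)$, and the fact that $q\leq[\alpha(\rho+|t|)]^2$ on $\supp u(t)$, I would derive a differential inequality for $y(t):=\|u(t)\|^2$ of the form $y''\leq C+C[\alpha(\rho+|t|)]^2\,y$ and integrate it by a Gronwall/comparison argument to obtain $\|u(t)\|\leq C\exp\!\big(c\int_0^{|t|}\alpha(\rho+s)\,ds\big)$. Hence $|(\cos(tA^{1/2})f,f)|\leq C\Phi(|t|)$ with $\log\Phi$ comparable to $t\cdot\alpha(t)$; the same estimate shows $f$ lies in the domain of $\cos(tA^{1/2})$. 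Crucially this bound depends only on $\alpha$ and $\rho$, not on $A$.

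Finally, the extension-free growth bound places $t\mapsto(\cos(tA^{1/2})f,f)$ in the quasi-analytic class associated with $\mathscr{V}$; since $t\cdot\alpha(t)\in\mathscr{V}$ by hypothesis, Lemma~\ref{L:Vul-lemma} shows that a measure in this class is uniquely determined by the extension-independent data attached to $f$ (its low-order moments, together with the wave relation furnished by the representation formula). Consequently $\sigma_f$, and with it $A$, cannot depend on the chosen extension, and essential self-adjointness follows.

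I expect the main obstacle to lie exactly in this last extraction of extension-independence of $\sigma_f$. One cannot argue directly that two extensions produce the same propagator: distinct self-adjoint extensions $A_1,A_2$ have distinct domains, so the energy identity cannot be applied to the difference $\cos(tA_1^{1/2})f-\cos(tA_2^{1/2})f$ to force it to vanish; and the natural shortcut---that a compactly supported element of $\Dom(H_V^{*})$ already lies in the minimal domain $\Dom(\overline{H_V|_{\vcomp}})$---is \emph{unavailable}, because the merely $L^2_{\loc}$ regularity of $V$ obstructs approximation by smooth compactly supported sections (one does not get $Vu_\epsilon\to Vu$ in $L^2$). Only the growth rate and the finite moments $\|f\|^2$, $(H_Vf,f)$, $\|H_Vf\|^2$ are manifestly extension-free, and converting this partial, quasi-analytic information into full determinacy of $\sigma_f$ is the delicate heart of the argument. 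It is precisely here that $t\cdot\alpha(t)\in\mathscr{V}$---the sharp threshold $q\lesssim r^2$, beyond which essential self-adjointness fails as for $-|x|^{\kappa}$, $\kappa>2$---is indispensable. Matching the energy growth exponent to $\mathscr{V}$ and controlling the $\mathcal{D}_{\loc}$ regularity of $V$ throughout the finite-propagation and energy estimates are the principal technical tasks.
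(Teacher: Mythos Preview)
Your overall strategy is right, and you correctly locate the crux of the argument: one must show that the cosine data $Z(t)=(\cos(tA^{1/2})f,f)$ is independent of the chosen self-adjoint extension $A$, since Vul's Lemma~\ref{L:Vul-lemma} is a \emph{uniqueness} theorem for the representation $Z(t)=\int\cos(\sqrt\lambda\,t)\,d\sigma(\lambda)$ and needs the full function $Z$ as input, not merely a growth bound together with finitely many moments. Your final paragraph effectively concedes that you do not know how to obtain this, and the workaround you suggest---feeding only the growth rate plus $\|f\|^2,(H_Vf,f),\|H_Vf\|^2$ into a quasi-analyticity principle---cannot determine $\sigma_f$: those data fix only three moments, and Lemma~\ref{L:Vul-lemma} makes no such claim.

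The missing device is the auxiliary \emph{truncated} potential. Set $V_\rho:=1_{B_\rho}V$; then $V_\rho$ is globally $\mathcal D$-decomposable, so $H_{V_\rho}|_{\vcomp}$ is lower semibounded and essentially self-adjoint, and finite propagation speed is available for the propagator $U(t):=\cos\!\big((\overline{H_{V_\rho}})^{1/2}t\big)f$ of \emph{this} operator (Lemma~\ref{L:2-2}). One then shows (Lemma~\ref{L:2-2-5}) that the compactly supported section $U(t)$ can be approximated in graph norm by $f_j\in\vcomp$ with $\supp f_j\subset B_\rho$; since $V_\rho=V$ on $B_\rho$, this yields $U(t)\in\Dom(\overline{H_V|_{\vcomp}})\subset\Dom(A)$ with $\overline{H_{V_\rho}}U=AU$, so by uniqueness for the abstract wave equation (Lemma~\ref{L:2-1}) one gets $\cos(tA^{1/2})f=U(t)=\cos\!\big((F_\rho)^{1/2}t\big)f$, which is the representation formula~(\ref{E:formula-or}) and is manifestly independent of $A$. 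Your attempt to prove finite propagation speed directly for $\cos(tA^{1/2})$ is not justified: for an arbitrary self-adjoint extension one has no a priori control on $\Dom(A)$, and the ``principal symbol'' heuristic does not supply a proof. A secondary point: once~(\ref{E:formula-or}) is in hand, the growth estimate follows immediately from the spectral lower bound $F_\rho\geq-(\alpha(\rho))^2$ via $|\cos(\sqrt\lambda\,t)|\leq\cosh(t\alpha(\rho))$, with no need for energy identities or Gronwall.
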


If $V\in\lloc^2(\End\vbn)$ is a self-adjoint section and
\begin{equation}\label{E:hyp-minorant-2-addition}
(H_{V}u,u)\geq-c,
\end{equation}
for all $u\in\vcomp$, where $c>0$ is a constant, then (see section B.2 in~\cite{Guneysu-2016}) there exists a self-adjoint extension of $H_{V}|_{\vcomp}$ in $L^2(\vbn)$. If $\alpha(t):=\sqrt{c}+bt$, where $c>0$ is as in~(\ref{E:hyp-minorant-2-addition}) and $b>0$ is a constant, then $t\cdot \alpha(t)$ belongs to the class  $\mathscr{V}$. Moreover, the hypothesis~(\ref{E:hyp-minorant-2-addition}) implies that $H_{V}$ satisfies the condition~(\ref{E:hyp-minorant-2}) with $q(x)=[(\alpha\circ r)(x)]^2$, where $\alpha(t)=\sqrt{c}+bt$ and $r(x)$ is as in~(\ref{E:dist-x-0}). Thus, from Theorem~\ref{T:m-2} we get the following corollary:
%Keeping in mind that $p(t)=Ct$ with $C>0$ belongs to the class $\mathscr{V}$, we obtain the following corollary of Theorem~\ref{T:m-2}:

\begin{cor}\label{C:m-2-c1} Let $(M,g)$, $\vbn$, and $\nabla$ be as in the hypotheses of Theorem~\ref{T:m-2}.  Assume that $V\in\lloc^{2}(\End \vbn)$ is $\mathcal{D}_{\loc}$-decomposable. Furthermore, assume that $H_{V}$ satisfies the condition~(\ref{E:hyp-minorant-2-addition}). Then, $H_{V}|_{\vcomp}$ is essentially self-adjoint in $L^2(\vbn)$.
\end{cor}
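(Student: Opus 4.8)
The plan is to read the corollary off Theorem~\ref{T:m-2}, whose conclusion is a dichotomy: either $H_{V}|_{\vcomp}$ possesses no self-adjoint extension in $L^2(\vbn)$, or its closure is the unique such extension. The hypothesis~(\ref{E:hyp-minorant-2-addition}) will let me discard the first alternative, so the entire task reduces to (a) exhibiting one self-adjoint extension and (b) checking that the quantitative assumptions of Theorem~\ref{T:m-2} hold for a well-chosen $\alpha$.

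First I would record that $H_{V}|_{\vcomp}$ is symmetric---since each $V(x)$ is self-adjoint (indeed $V=V_1-V_2$ with $V_j\geq 0$) and $\nabla^{\dagger}\nabla$ is formally self-adjoint---and, by~(\ref{E:hyp-minorant-2-addition}), lower semibounded on $\vcomp$. A lower semibounded symmetric operator always carries a self-adjoint (Friedrichs) extension; this is precisely the abstract fact recalled just before the statement (see section~B.2 in~\cite{Guneysu-2016}). Consequently a self-adjoint extension of $H_{V}|_{\vcomp}$ exists, and the first horn of the dichotomy in Theorem~\ref{T:m-2} is excluded.

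It then remains to verify the hypotheses of Theorem~\ref{T:m-2}. The manifold $(M,g)$ is geodesically complete and $V$ is $\mathcal{D}_{\loc}$-decomposable by assumption, so only the minorization data must be supplied. I would take $\alpha(t):=\sqrt{c}+bt$ with an arbitrary $b>0$, where $c$ is the constant of~(\ref{E:hyp-minorant-2-addition}). Then $t\cdot\alpha(t)=t(\sqrt{c}+bt)$ is of the form $t\,m_1(t)$ with $m_1(t)=\sqrt{c}+bt$, which was noted before Theorem~\ref{T:m-2} to lie in $\mathscr{V}$. Finally, since $\alpha(t)\geq\sqrt{c}$ the associated minorant satisfies $q(x)=[(\alpha\circ r)(x)]^2\geq c$ pointwise, whence $(qu,u)\geq c\,(u,u)$; combining this with the semiboundedness~(\ref{E:hyp-minorant-2-addition}) delivers $(\nabla^{\dagger}\nabla u+Vu,u)\geq -(qu,u)$ for all $u\in\vcomp$, which is exactly~(\ref{E:hyp-minorant-2}).

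With every hypothesis of Theorem~\ref{T:m-2} in force and the ``no self-adjoint extension'' alternative already ruled out, the theorem yields that $\overline{H_{V}|_{\vcomp}}$ is the unique self-adjoint extension, i.e.\ $H_{V}|_{\vcomp}$ is essentially self-adjoint. The only genuinely non-formal ingredient is the conversion of the dichotomy into an affirmative statement in the second step, and that is supplied wholesale by the Friedrichs-extension argument; the remaining steps are a routine check that the explicit choice $\alpha(t)=\sqrt{c}+bt$ meets the $\mathscr{V}$-membership and minorization requirements of Theorem~\ref{T:m-2}.
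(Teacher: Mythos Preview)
Your proof is correct and follows essentially the same route as the paper: exhibit a self-adjoint extension via the abstract Friedrichs construction for lower semibounded symmetric operators, then verify the hypotheses of Theorem~\ref{T:m-2} with the explicit choice $\alpha(t)=\sqrt{c}+bt$ so that $t\alpha(t)\in\mathscr{V}$ and~(\ref{E:hyp-minorant-2-addition}) implies~(\ref{E:hyp-minorant-2}). The only cosmetic difference is that the paper presents this argument in the paragraph preceding the corollary rather than as a separate proof.
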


As $(\Delta +V)|_{\mcomp}$, with a real-valued $V\in\lloc^2(M)$, has a self-adjoint extension in $L^2(M)$ (see Lemma~\ref{L:GT-lemma} below), the next corollary is a direct consequence of Theorem~\ref{T:m-2}:

\begin{cor}\label{C:m-2-c2} Let $(M,g)$ be a geodesically complete Riemannian manifold. Assume that $V\in\lloc^{2}(M)$ is a real-valued and $\mathcal{D}_{\loc}$-decomposable function. Assume that~(\ref{E:hyp-minorant-2}) is satisfied with $\Delta$  in place of $\nabla^{\dagger}\nabla$. Then the operator $(\Delta +V)|_{\mcomp}$ is essentially self-adjoint in $L^2(M)$.
\end{cor}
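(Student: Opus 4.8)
The plan is to realize $(\Delta+V)|_{\mcomp}$ as a special case of the operator $H_{V}|_{\vcomp}$ treated in Theorem~\ref{T:m-2}, and then to eliminate the first alternative in the dichotomy of that theorem by invoking the existence of a self-adjoint extension guaranteed by Lemma~\ref{L:GT-lemma}. First I would take the trivial Hermitian line bundle $\vbn=M\times\mathbb{C}$ with its standard Hermitian structure and let $\nabla=d$ be the trivial (flat) metric connection. With these choices one has $\nabla^{\dagger}\nabla=d^{\dagger}d=\Delta$, the (non-negative) Laplace--Beltrami operator on functions, and $\End\vbn$ is canonically identified with $M\times\mathbb{C}$. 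A real-valued function $V\in\lloc^{2}(M)$ is then a self-adjoint section of $\End\vbn$, and under this identification $\vcomp=\mcomp$ and $L^2(\vbn)=L^2(M)$.

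Next I would check that the hypotheses of Theorem~\ref{T:m-2} hold verbatim in this setting: $(M,g)$ is geodesically complete by assumption; the $\mathcal{D}_{\loc}$-decomposition $V=V_1-V_2$ with $0\le V_j\in\lloc^{2}(M)$ and $V_2=|V_2|\in\mathcal{D}_{\loc}(M)$ is exactly the assumed $\mathcal{D}_{\loc}$-decomposability of the scalar potential; and~(\ref{E:hyp-minorant-2}), read with $\Delta$ in place of $\nabla^{\dagger}\nabla$, is precisely the quadratic-form minorization required there, with the same $q=[(\alpha\circ r)]^2$ and $t\cdot\alpha(t)\in\mathscr{V}$. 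Applying Theorem~\ref{T:m-2} then yields the dichotomy: either $(\Delta+V)|_{\mcomp}$ has no self-adjoint extension in $L^2(M)$, or its closure is its unique self-adjoint extension, i.e.\ it is essentially self-adjoint.

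To finish I would rule out the first alternative. Since $V$ is real-valued, the operator $(\Delta+V)|_{\mcomp}$ commutes with complex conjugation $u\mapsto\bar u$, hence is a symmetric operator invariant under an antilinear involution and so has equal deficiency indices; this is the content of Lemma~\ref{L:GT-lemma}, which guarantees the existence of a self-adjoint extension of $(\Delta+V)|_{\mcomp}$ in $L^2(M)$. Thus the non-existence alternative cannot occur, and essential self-adjointness follows.

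The argument is otherwise routine, and the only conceptual point worth flagging is that the dichotomy of Theorem~\ref{T:m-2} is genuine in the general bundle setting, where the existence of a self-adjoint extension need not hold. It is exactly the real-valuedness of the scalar potential---equivalently, the conjugation symmetry furnishing equal deficiency indices via Lemma~\ref{L:GT-lemma}---that is unavailable in general but present here, and this is what promotes ``at most one self-adjoint extension'' to genuine essential self-adjointness.
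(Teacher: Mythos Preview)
Your proposal is correct and matches the paper's own argument essentially verbatim: specialize Theorem~\ref{T:m-2} to the trivial line bundle with $\nabla=d$, and eliminate the ``no self-adjoint extension'' alternative by invoking Lemma~\ref{L:GT-lemma} (conjugation symmetry of the real-valued scalar operator). There is nothing to add.
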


\section{Lemmas used in the proof Theorem~\ref{T:m-1}}\label{S:prelim-t-1}
\subsection{Quadratic forms} Assume that $0\leq Q\in\lloc^2(\End\vbn)$ and consider the sesquilinear forms
\begin{equation}\label{E:forms-h}
h_{\nabla}[u,v]:=\int\langle \nabla u,\nabla v\rangle \,d\mu, \quad h_{Q}[u,v]:=\int \langle Qu,v\rangle\,d\mu,
\end{equation}
with the corresponding quadratic forms $h_{\nabla}[\cdot]$ and $h_{Q}[\cdot]$ and their domains
\[
\Dom(h_{\nabla})=\{u\in L^2(\vbn)\colon h_{\nabla}[u]<\infty\},
\]
\[
\Dom(h_{Q})=\{u\in L^2(\vbn)\colon h_{Q}[u]<\infty\}.
\]
Let $h_1:=h_{\nabla}+h_{Q}$ with the domain $\Dom(h_{1})=\Dom(h_{\nabla})\cap \Dom(h_{Q})$. Note that $\vcomp\subset\Dom(h_{1})$, which makes $h_1$ densely defined. In the subsequent discussion, the symbol $W^{-1,2}(\vbn)$ refers to the anti-dual of $\Dom(h_{\nabla})$.

\subsection{Lemmas}
Having introduced the needed forms, we are ready to state the first lemma, whose proof can be found in Lemma 2.2 of~\cite{Milatovic-03}.
%(under a weaker hypothesis $0\leq Q\in\lloc^1(\End\vbn)$)
\begin{lemma}\label{L:L-1} If $(M,g)$ is geodesically complete and $0\leq Q\in\lloc^2(\End\vbn)$, then $\vcomp$ is a form core of $h_1$, that is, $\vcomp$ is dense in $\Dom(h_{1})$ with respect to the norm $\|\cdot\|^2_{1}:=h_{\nabla}[\cdot]+h_{Q}[\cdot]+\|\cdot\|^2$, where $\|\cdot\|$ is the norm in $L^2(\vbn)$.
\end{lemma}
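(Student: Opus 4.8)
The plan is to approximate an arbitrary $u\in\Dom(h_1)$ by elements of $\vcomp$ in the norm $\|\cdot\|_1$ through three successive reductions: first I would make the support compact, then make the section bounded, and finally make it smooth by mollification. Throughout, I would treat the two form contributions $h_{\nabla}$ and $h_{Q}$ separately, and the geodesic completeness of $(M,g)$ will be used only in the first reduction. Recall $\vcomp\subset\Dom(h_1)$, so the issue is genuinely one of density.

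For the first reduction I would invoke Hopf--Rinow: the closed balls $\overline{B_\rho(x_0)}$ are compact and $r(x)=d_{g}(x_0,x)$ is $1$-Lipschitz, so $|dr|\le 1$ a.e. Fixing $\chi\in C_c^\infty([0,\infty))$ with $0\le\chi\le1$, $\chi\equiv1$ on $[0,1]$, $\supp\chi\subset[0,2]$, I would set $\phi_k:=\chi(r/k)$, giving compactly supported Lipschitz functions with $\phi_k\uparrow1$ and $|d\phi_k|\le k^{-1}\|\chi'\|_\infty$ a.e. From $\n(\phi_k u)=\phi_k\n u+d\phi_k\otimes u$,
\[
h_{\nabla}[\phi_k u-u]\le 2\int(\phi_k-1)^2|\n u|^2\,d\mu+2\int|d\phi_k|^2|u|^2\,d\mu,
\]
whose first term tends to $0$ by dominated convergence and whose second is $\le 2k^{-2}\|\chi'\|_\infty^2\|u\|^2\to0$; likewise $h_{Q}[\phi_k u-u]=\int(\phi_k-1)^2\langle Qu,u\rangle\,d\mu\to0$ and $\|\phi_k u-u\|\to0$, since $\langle Qu,u\rangle\in L^1$ and $|u|^2\in L^1$. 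This reduces matters to a compactly supported $w\in\Dom(h_1)$. For the second reduction I would apply the fiberwise radial truncation $w_N$ of $w$ at level $N$ (equal to $w$ where $|w|\le N$ and to $Nw/|w|$ otherwise): then $|w_N|\le N$, $|\n w_N|\le|\n w|$, so $h_{\nabla}[w_N-w]\to0$, while $|w_N-w|^2\le|w|^2 1_{\{|w|>N\}}$ forces $h_{Q}[w_N-w]\le\int_{\{|w|>N\}}\langle Qw,w\rangle\,d\mu\to0$ and $\|w_N-w\|\to0$. Hence it suffices to approximate a \emph{bounded}, compactly supported $w\in\Dom(h_1)$.

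For the third reduction I would cover $\supp w$ by finitely many charts trivializing $\vbn$, pass to a subordinate partition of unity, and reduce to $w$ supported in one chart, viewed as a compactly supported $W^{1,2}\cap L^\infty$ map into $\mathbb{C}^{\operatorname{rank}\vbn}$. Friedrichs mollifiers $w_\epsilon:=w*\rho_\epsilon\in\vcomp$ satisfy $w_\epsilon\to w$ in the flat $W^{1,2}$ norm, and since the connection coefficients are bounded on the (compact) chart closure, this gives $h_{\nabla}[w_\epsilon-w]\to0$ and $\|w_\epsilon-w\|\to0$. The delicate term, and the main obstacle, is $h_{Q}[w_\epsilon-w]=\int\langle Q(w_\epsilon-w),w_\epsilon-w\rangle\,d\mu$ with only $Q\in\lloc^2$: the Sobolev embedding $W^{1,2}\hookrightarrow L^{2n/(n-2)}$ controls $\int|Q|\,|w_\epsilon-w|^2$ through H\"older only when $n\le4$ and breaks down for $n\ge5$. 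This is precisely why I would carry out the boundedness reduction first: because $\|w_\epsilon\|_\infty\le\|w\|_\infty$, the integrand $|w_\epsilon-w|^2$ is bounded by $(2\|w\|_\infty)^2$, is supported in a fixed compact set on which $|Q|\in L^1$, and tends to $0$ a.e., so dominated convergence yields $h_{Q}[w_\epsilon-w]\to0$ in every dimension. Combining the three reductions produces the desired approximation and shows that $\vcomp$ is a form core of $h_1$; the only genuinely subtle point is the interaction of the $\lloc^2$ potential with mollification, which is resolved by truncating before smoothing.
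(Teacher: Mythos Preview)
The paper does not prove this lemma in the text; it simply refers to Lemma~2.2 of~\cite{Milatovic-03}. Your cut-off/truncate/mollify scheme is precisely the standard route for such form-core statements and is, in outline, what that reference carries out. Your argument is correct. The one step you pass over quickly is the pointwise bound $|\nabla w_N|\le|\nabla w|$: this holds because the nearest-point projection onto the closed ball in $\CC^m$ is $1$-Lipschitz and, since $\nabla$ is Hermitian, at each point one may work in a synchronous orthonormal frame in which $\nabla$ reduces to $d$ and the Lipschitz chain rule for $W^{1,2}$-sections applies. With that in hand, $h_\nabla[w_N-w]\to0$ follows by dominated convergence, since $\nabla(w_N-w)=0$ a.e.\ on $\{|w|\le N\}$ and is bounded by $2|\nabla w|$ on the complement. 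Your observation that the truncation step is what rescues the mollification argument for $h_Q$ in dimensions $n\ge5$ is the key insight and matches the reason the cited proof is organized the same way.
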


\begin{lemma}\label{L:L-2} Assume that $(M,g)$ is geodesically complete.  Let $W=W_1-W_2$, where $0\leq W_j\in\lloc^2(\End\vbn)$, $j=1,2$. Assume that there exist numbers $0\leq \delta\leq 1$ and $C\in\mathbb{R}$ such that
\begin{equation}\label{E:cond-m-1}
(\nabla^{\dagger}\nabla v+Wv,v)\geq\delta [(\nabla^{\dagger}\nabla v+W_1v,v)]-C\|v\|^2,
\end{equation}
for all $v\in\vcomp$.
Let $u\in \Dom(h_{\nabla})\cap \Dom(h_{W_1})$. Then, $\langle W_2u,u\rangle\in L^1(M)$ and
\begin{equation}\label{E:cond-m-2}
\|\nabla u\|^2+(Wu,u)\geq \delta [\|\nabla u\|^2+(W_1u,u)]-C\|u\|^2,
\end{equation}
with $\delta$ and $C$ as in~(\ref{E:cond-m-1}).
\end{lemma}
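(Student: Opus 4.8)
The plan is to reduce the whole statement to a single approximation argument based on the form core supplied by Lemma~\ref{L:L-1}. First I would rewrite the hypothesis~(\ref{E:cond-m-1}) in a more usable shape. For $v\in\vcomp$ integration by parts gives $(\nabla^{\dagger}\nabla v,v)=\|\nabla v\|^2=h_{\nabla}[v]$, while $(Wv,v)=h_{W_1}[v]-h_{W_2}[v]$, both terms being finite because $v$ has compact support and $W_j\in\lloc^2(\End\vbn)$ (so that $\langle W_j v,v\rangle\le|W_j|\,|v|^2\in L^1(M)$). Hence~(\ref{E:cond-m-1}) is equivalent to the upper bound
\begin{equation*}
h_{W_2}[v]\le (1-\delta)\big(h_{\nabla}[v]+h_{W_1}[v]\big)+C\|v\|^2,\qquad v\in\vcomp. \tag{$\star$}
\end{equation*}
This is the crucial reformulation: it controls the (possibly unbounded) form $h_{W_2}$ by the form $h_1=h_{\nabla}+h_{W_1}$ together with the $L^2$-norm.

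Next, given $u\in\Dom(h_{\nabla})\cap\Dom(h_{W_1})=\Dom(h_1)$, I would apply Lemma~\ref{L:L-1} with $Q=W_1$ to obtain a sequence $v_n\in\vcomp$ with $\|v_n-u\|_1\to 0$; in particular $v_n\to u$, $\nabla v_n\to\nabla u$, and $W_1^{1/2}v_n\to W_1^{1/2}u$, all in $L^2(\vbn)$, so $(v_n)$ is $\|\cdot\|_1$-Cauchy. Applying $(\star)$ to the difference $v_n-v_m$ and using $W_2\geq 0$ gives
\begin{equation*}
0\le h_{W_2}[v_n-v_m]\le (1-\delta)\big(h_{\nabla}[v_n-v_m]+h_{W_1}[v_n-v_m]\big)+C\|v_n-v_m\|^2,
\end{equation*}
and every term on the right tends to $0$ as $n,m\to\infty$ (the sign of $C$ is irrelevant since $\|v_n-v_m\|\to 0$). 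Thus $h_{W_2}[v_n-v_m]\to 0$, i.e.\ $W_2^{1/2}v_n$ is Cauchy in $L^2(\vbn)$ and hence converges to some $w\in L^2(\vbn)$.

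The one genuinely technical point---and the step I expect to be the main obstacle---is identifying $w$ with $W_2^{1/2}u$, as this is precisely what yields both $\langle W_2u,u\rangle\in L^1(M)$ and the convergence $h_{W_2}[v_n]\to h_{W_2}[u]$. To do this I would pass to a subsequence along which $v_n\to u$ pointwise $\mu$-almost everywhere; since for a.e.\ $x$ the map $W_2^{1/2}(x)$ is a fixed continuous linear operator on the finite-dimensional fiber $\vbn_{x}$, it follows that $W_2^{1/2}v_n\to W_2^{1/2}u$ a.e. On the other hand $W_2^{1/2}v_n\to w$ in $L^2$, so a further subsequence converges to $w$ a.e.; uniqueness of pointwise limits gives $w=W_2^{1/2}u$ a.e. Consequently $W_2^{1/2}u\in L^2(\vbn)$, i.e.\ $\langle W_2u,u\rangle=|W_2^{1/2}u|^2\in L^1(M)$, and moreover $h_{W_2}[v_n]=\|W_2^{1/2}v_n\|^2\to\|W_2^{1/2}u\|^2=h_{W_2}[u]$.

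Finally I would pass to the limit in the hypothesis itself. Writing~(\ref{E:cond-m-1}) for $v_n$ as
\begin{equation*}
h_{\nabla}[v_n]+h_{W_1}[v_n]-h_{W_2}[v_n]\ge \delta\big(h_{\nabla}[v_n]+h_{W_1}[v_n]\big)-C\|v_n\|^2,
\end{equation*}
and using the four convergences $h_{\nabla}[v_n]\to\|\nabla u\|^2$, $h_{W_1}[v_n]\to (W_1u,u)$, $h_{W_2}[v_n]\to (W_2u,u)$, and $\|v_n\|^2\to\|u\|^2$, I obtain in the limit
\begin{equation*}
\|\nabla u\|^2+(Wu,u)\ge \delta\big[\|\nabla u\|^2+(W_1u,u)\big]-C\|u\|^2,
\end{equation*}
which is exactly~(\ref{E:cond-m-2}); here $(Wu,u)=(W_1u,u)-(W_2u,u)$ is well defined since both quantities are now finite. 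The argument is uniform in $\delta\in[0,1]$ and in the sign of $C$, so no case distinction is needed.
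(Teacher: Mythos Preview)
Your proof is correct and follows essentially the same approach as the paper: rewrite~(\ref{E:cond-m-1}) as the bound $(\star)$ on $h_{W_2}$, invoke Lemma~\ref{L:L-1} to approximate $u$ by $v_n\in\vcomp$, and pass to the limit. The only difference is that where you run a Cauchy argument on $W_2^{1/2}v_n$ to obtain actual convergence $h_{W_2}[v_n]\to h_{W_2}[u]$, the paper simply applies Fatou's lemma to the nonnegative integrands $\langle W_2 v_k,v_k\rangle$ along an a.e.\ convergent subsequence, which yields the needed inequality in one step.
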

\begin{proof}
Denoting $h_1:=h_{\nabla}+h_{W_1}$, we can rewrite~(\ref{E:cond-m-1}) as
%and keeping in mind that
%\[
%\delta [(\nabla^{\dagger}\nabla v+W_1v,v)]\geq \kappa [(\nabla^{\dagger}\nabla v+W_1v,v)]
%\]
%for some $0\leq \kappa<1$, from~(\ref{E:cond-m-1}) we get
\begin{equation}\label{E:a-f-1}
(1-\delta)h_1[v]+C\|v\|^2\geq \int_{M}\langle W_2v,v\rangle\,d\mu,
\end{equation}
for all $v\in \vcomp$. Since $u\in \Dom(h_{\nabla})\cap \Dom(h_{W_1})$, using Lemma~\ref{L:L-1} we can find a sequence $v_k\in\vcomp$
approximating $u$ in the norm $\|\cdot\|_1$. In particular, the sequence $\{v_k\}$ has a subsequence, which we also denote by $\{v_k\}$, converging a.e. to $u$. Using~(\ref{E:a-f-1}) with $v=v_k$ and applying Fatou's lemma to the term $(W_2v_k,v_k)$ we see that~(\ref{E:a-f-1}) holds with $u$ in place of $v$. This shows that $\langle W_2u,u\rangle\in L^1(M)$ and that $u$ satisfies the property~(\ref{E:cond-m-2}).
\end{proof}

In the proof of the next lemma (and later in the paper), we will need the concept of the minimal and maximal operator corresponding to~(\ref{E:def-H}) with $V\in\lloc^2(\End\vbn)$. The term \emph{minimal operator} corresponding to $H_{V}$, denoted by $H_{\min}$, refers to the closure of $H_{V}|_{\vcomp}$, while \emph{maximal operator} is defined as $H_{\max}:=(H_{\min})^{*}$, where $T^*$ indicates the adjoint operator corresponding to $T$. It is well known that $\Dom(H_{\max})$ can be described as
\[
\Dom(H_{\max})=\{u\in L^2(\vbn)\colon H_{V}u\in L^2(\vbn)\},
\]
where $H_{V}u$ is interpreted in distributional sense, and $H_{\max}=H_{V}u$ for all $u\in\Dom(H_{\max})$.

\begin{lemma}\label{L:L-3} Assume that $(M,g)$ is geodesically complete.  Let $W=W_1-W_2$ with $0\leq W_j\in\lloc^2(\End\vbn)$, $j=1,2$. Assume that there exist numbers $0<\delta\leq 1$ and $C\in\mathbb{R}$ such that~(\ref{E:cond-m-1}) is satisfied. Additionally, assume that $(\nabla^{\dagger}\nabla+W)|_{\vcomp}$ is essentially self-adjoint. Let $w\in L^2(\vbn)$ and $(\nabla^{\dagger}\nabla+W)w\in W^{-1,2}(\vbn)$. Then, $w\in\Dom(h_{\nabla})\cap \Dom(h_{W_1})$.
\end{lemma}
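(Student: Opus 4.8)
The plan is to realize $w$ as an element of the form domain of the self-adjoint closure of $(\nabla^{\dagger}\nabla+W)|_{\vcomp}$, and then to transfer membership in that form domain to membership in $\Dom(h_{\nabla})\cap\Dom(h_{W_1})$ by means of the quadratic-form inequality~(\ref{E:cond-m-1}). First I would set $A:=\overline{(\nabla^{\dagger}\nabla+W)|_{\vcomp}}$, which is self-adjoint by hypothesis. Discarding the nonnegative term $\delta h_1[v]$ in~(\ref{E:cond-m-1}) gives $(Av,v)\geq -C\|v\|^2$ for $v\in\vcomp$, so $A\geq -C$ and hence $B:=A+C+1\geq 1$ is self-adjoint and boundedly invertible. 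Write $Q_B$ for its (closed) quadratic form, $Q_B=Q_A+(C+1)(\cdot,\cdot)$, and let $\mathcal{Q}:=\Dom(Q_A)=\Dom(B^{1/2})$ be the associated form domain, a Hilbert space under $Q_B[\cdot,\cdot]$. The argument then has two steps: showing $\mathcal{Q}\subseteq\Dom(h_{\nabla})\cap\Dom(h_{W_1})$, and showing $w\in\mathcal{Q}$.

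For the first step I would use that an operator core is automatically a form core: since $\vcomp$ is a core for $A$, it is dense in $\mathcal{Q}$ in the $Q_B^{1/2}$-norm. For $v\in\vcomp$, the inequality~(\ref{E:cond-m-1}) rewrites as $Q_B[v]=(Av,v)+(C+1)\|v\|^2\geq \delta h_1[v]+\|v\|^2\geq \delta\|v\|_1^2$, where I use $\delta\leq 1$ and write $\|\cdot\|_1^2=h_1[\cdot]+\|\cdot\|^2$. Thus the form norm dominates $\sqrt{\delta}\,\|\cdot\|_1$ on $\vcomp$. By Lemma~\ref{L:L-1} the space $(\Dom(h_1),\|\cdot\|_1)$ is complete and is the $\|\cdot\|_1$-completion of $\vcomp$, so any $Q_B$-Cauchy sequence in $\vcomp$ is $\|\cdot\|_1$-Cauchy and its $L^2$-limit lies in $\Dom(h_1)=\Dom(h_{\nabla})\cap\Dom(h_{W_1})$. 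This yields a continuous inclusion $\mathcal{Q}\hookrightarrow\Dom(h_1)$, and here the hypothesis $\delta>0$ is essential.

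For the second step I would set up the Gelfand triple $\mathcal{Q}\hookrightarrow L^2(\vbn)\hookrightarrow\mathcal{Q}^*$. Since $\mathcal{Q}\hookrightarrow\Dom(h_{\nabla})$ continuously (by Step 1, as $h_{\nabla}\leq h_1$), dualizing gives $W^{-1,2}(\vbn)=\Dom(h_{\nabla})^*\hookrightarrow\mathcal{Q}^*$, and also $L^2(\vbn)\hookrightarrow\mathcal{Q}^*$. Hence $g:=(\nabla^{\dagger}\nabla+W)w+(C+1)w\in\mathcal{Q}^*$. Let $u:=\widetilde{B}^{-1}g\in\mathcal{Q}$, where $\widetilde{B}\colon\mathcal{Q}\to\mathcal{Q}^*$ is the canonical isomorphism determined by $\langle\widetilde{B}u,v\rangle=Q_B[u,v]$ (equivalently, obtain $u$ from the Riesz representation of the bounded antilinear functional $v\mapsto\langle g,v\rangle$ on $\mathcal{Q}$). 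For $\phi\in\vcomp\subseteq\Dom(B)$ one has $Q_B[u,\phi]=(u,B\phi)=(u,(\nabla^{\dagger}\nabla+W+C+1)\phi)$, while by the defining property of $W^{-1,2}(\vbn)$ together with the formal self-adjointness of $\nabla^{\dagger}\nabla+W$ on $\vcomp$, $\langle g,\phi\rangle=(w,(\nabla^{\dagger}\nabla+W)\phi)+(C+1)(w,\phi)=(w,(\nabla^{\dagger}\nabla+W+C+1)\phi)$. Equating $Q_B[u,\phi]=\langle g,\phi\rangle$ gives $(u-w,B\phi)=0$ for all $\phi\in\vcomp$. Since $\vcomp$ is a core for $B$ and $B$ is boundedly invertible, $B(\vcomp)$ is dense in $L^2(\vbn)$, so $u=w$. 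Therefore $w=u\in\mathcal{Q}\subseteq\Dom(h_{\nabla})\cap\Dom(h_{W_1})$.

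The main obstacle I anticipate is Step 1, the inclusion $\mathcal{Q}\subseteq\Dom(h_1)$, which rests on two facts that must be stated with care: that an operator core for $A$ is automatically a form core (so $\vcomp$ is $Q_B$-dense in $\mathcal{Q}$), and that~(\ref{E:cond-m-1}) combined with Lemma~\ref{L:L-1} identifies the $Q_B$-completion of $\vcomp$ as a subspace of the complete space $(\Dom(h_1),\|\cdot\|_1)$, with $\delta>0$ doing the decisive work. The secondary delicate point is the correct interpretation of $W^{-1,2}(\vbn)$ and the justification of the pairing identity $\langle(\nabla^{\dagger}\nabla+W)w,\phi\rangle=(w,(\nabla^{\dagger}\nabla+W)\phi)$, which is precisely the meaning of the hypothesis $(\nabla^{\dagger}\nabla+W)w\in W^{-1,2}(\vbn)$; once these are in place, the remainder is the routine Gelfand-triple bookkeeping above.
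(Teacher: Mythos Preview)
Your argument is correct and shares the paper's overall strategy---produce a solution $u$ of the shifted equation by a variational (Lax--Milgram/Riesz) principle, then invoke essential self-adjointness to force $u=w$---but the packaging differs in a way worth noting. The paper works directly on the Hilbert space $\Dom(h_1)$: it uses Lemma~\ref{L:L-2} to extend the sesquilinear form $Y[u_1,u_2]=h_{\nabla}[u_1,u_2]+h_{W_1}[u_1,u_2]-h_{W_2}[u_1,u_2]+(C+1)(u_1,u_2)$ from $\vcomp$ to all of $\Dom(h_1)$, checks coercivity there, and applies Lax--Milgram to the functional $F=(\nabla^{\dagger}\nabla+W+C+1)w$ restricted to $\Dom(h_1)$. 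You instead work on the abstract form domain $\mathcal{Q}=\Dom(B^{1/2})$ of the self-adjoint closure, where the form $Q_B$ is closed for free, and prove the inclusion $\mathcal{Q}\subseteq\Dom(h_1)$ as a separate step via the form-core property and~(\ref{E:cond-m-1}); this bypasses Lemma~\ref{L:L-2} at this stage. The uniqueness arguments are literally equivalent: the paper uses $\ker(S_{\max})=\{0\}$, you use density of $B(\vcomp)$, and these are the same statement since $\ker(S_{\max})=\ker((S|_{\vcomp})^*)=B(\vcomp)^{\perp}$. One small attribution point: completeness of $(\Dom(h_1),\|\cdot\|_1)$ is not the content of Lemma~\ref{L:L-1} (which gives only density of $\vcomp$) but follows because $h_1=h_{\nabla}+h_{W_1}$ is a sum of closed nonnegative forms and hence closed.
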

\begin{proof} With $C$ as in~(\ref{E:cond-m-1}) and $I$ standing for the identity endomorphism of $\vbn$, denote $S:=\nabla^{\dagger}\nabla+W+(C+1)I$, and observe that, by hypothesis, $S|_{\vcomp}$ is essentially self-adjoint and (strictly) positive operator (the latter property holds because of~(\ref{E:cond-m-1})), and, consequently (by Theorem X.26 in~\cite{rs}), we have $\ker(S_{\max})=\ker((S|_{\vcomp})^{*})=\{0\}$. Denote $h_1=h_{\nabla}+h_{W_1}$ and note that by Lemma~\ref{L:L-2}, on the space $\Dom(h_1)=\Dom(h_{\nabla})\cap \Dom(h_{W_1})$ we can define a sesquilinear form
\[
Y[u_1,u_2]:=h_{\nabla}[u_1,u_2]+h_{W_1}[u_1,u_2]-h_{W_2}[u_1,u_2]+(C+1)(u_1,u_2),
\]
where $(\cdot,\cdot)$ is the inner product in $L^2(\vbn)$. Let $(\cdot,\cdot)_{1}$ be the inner product corresponding to the norm $\|\cdot\|^2_{1}:=h_{1}[\cdot]+\|\cdot\|^2$. Looking at the property~(\ref{E:cond-m-2}), as granted by Lemma~\ref{L:L-2}, and adding $(C+1)\|\cdot\|^2$ to both sides, we see that $Y$ is a bounded and coercive form on the Hilbert space $\Dom(h_1)$ with the inner product $(\cdot,\cdot)_{1}$. (The form $Y$ is coercive because in this lemma we assume $\delta>0$.)

Let $w\in L^2(\vbn)$ be as in the hypothesis of this lemma and consider $F:=\nabla^{\dagger}\nabla w+Ww +(C+1)w$, with $C$ as in~(\ref{E:cond-m-2}).  By assumption, we have $F\in W^{-1,2}(\vbn)$, the anti-dual of $\Dom(h_{\nabla})$. As  $\Dom(h_{1})\subset\Dom(h_{\nabla})$, it follows that $F$ is a bounded linear functional the Hilbert space $\Dom(h_1)$ equipped with the inner product $(\cdot,\cdot)_{1}$. Therefore, by Lax--Milgram Theorem (see Theorem 5.8 in~\cite{GT-book}) there exists a unique element $u\in \Dom(h_{1})$ such that $Y(u, s)=F(s)$, for all $s\in\Dom(h_1)$. In particular, for all $s\in\vcomp\subset\Dom(h_1)$, the equality $Y(u, s)=F(s)$ can be written as
\[
(\nabla u,\nabla s)+(W_1u,s)-(W_2u,s)+(C+1)(u,s)=(\nabla^{\dagger}\nabla w+Ww +(C+1)w,s)_{d},
\]
where $(T,s)_{d}$ stands for the action of a distributional section $T$ on a section $s\in\vcomp$.
Using the integration by parts (see Lemma 8.8 in~\cite{bms}) in the term $(\nabla u,\nabla s)$ and remembering that $W_ju\in\lloc^1(\vbn)$, $j=1,2$, the last equation can be rewritten as
\[
(\nabla^{\dagger}\nabla u+Wu +(C+1)u,s)_{d}=(\nabla^{\dagger}\nabla w+Ww +(C+1)w,s)_{d},
\]
that is, $(\nabla^{\dagger}\nabla +W +(C+1)I)(u-w)=0$ in distributional sense, which shows that $(u-w)\in\ker (S_{\max})$. Remembering the property $\ker(S_{\max})=\{0\}$, we see that $w=u$, which means that $w\in\Dom(h_1)=\Dom(h_{\nabla})\cap \Dom(h_{W_1})$.
\end{proof}
%and looking at the Lemma~\ref{L:L-2}, we see that on the space $\Dom(h_1)$, the norms $\|\cdot\|^2_{1}:=h_{1}[\cdot]+\|\cdot\|^2$ and
%\[
%\|\cdot\|^2_{1,2}:=h_{\nabla}[\cdot]+h_{V_1}[\cdot]-h_{V_2}[\cdot]+(C+1)\|\cdot\|^2,
%\]

In the subsequent discussion we will use the following formula with $f\in C^{\infty}(M)$ and $u\in L^2(\vbn)$:
\begin{equation}\label{E:prod-2}
\bo(fu)=f\bo u-2\nabla_{(df)^{\sharp}}u +u\Delta f.
\end{equation}
Here, $\omega^{\sharp}$ stands for the vector field corresponding to the 1-form $\omega$ with respect to the metric $g$.

\begin{lemma}\label{L:L-4} Assume that $(M,g)$ is geodesically complete. Let $V=V_1-V_2$ with $0\leq V_j\in\lloc^2(\End\vbn)$, $j=1,2$.  Assume that $V$ satisfies the assumptions (i) and (ii) of Theorem~\ref{T:m-1}. Let $u\in\Dom (H_{\max})$,  where $H_{V}$ is as in~(\ref{E:def-H}). Then, $u\in W^{1,2}_{\loc}(\vbn)$ and $\langle V_1u,u\rangle\in\lloc^1(M)$.
\end{lemma}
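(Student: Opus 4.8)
The plan is to localize the problem at an arbitrary point $x_0\in M$ and reduce it to Lemma~\ref{L:L-3} applied to the \emph{cut-off} potential $\phi V$. Fix $x_0$ and let $0<\delta\le 1$, $C\in\mathbb{R}$, and $\phi\in\mcomp$ (with $0\le\phi\le 1$, $\phi\equiv 1$ on a neighborhood $\mathcal{O}$ of $x_0$) be as granted by assumptions (i) and (ii). Setting $W=\phi V$, $W_1=\phi V_1$, $W_2=\phi V_2$, we have $0\le W_j\in\lloc^2(\End\vbn)$; the inequality~(\ref{E:cond-m-1}) becomes \emph{exactly} assumption (i), and the essential self-adjointness hypothesis of Lemma~\ref{L:L-3} is exactly assumption (ii). Thus Lemma~\ref{L:L-3} is applicable to $\bo+\phi V$, and it remains to produce an element whose image under $\bo+\phi V$ lies in $W^{-1,2}(\vbn)$ and which coincides with $u$ near $x_0$.

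The key device is a \emph{second} cut-off. Choose $\psi\in\mcomp$ with $\supp\psi\subset\mathcal{O}$ and $\psi\equiv 1$ on a smaller neighborhood $\mathcal{O}'$ of $x_0$; in particular $\phi\equiv 1$ on $\supp\psi$, so that $\phi\psi=\psi$. Put $w:=\psi u\in L^2(\vbn)$. Using the product formula~(\ref{E:prod-2}) together with the identity $\bo u+Vu=H_{V}u=:f\in L^2(\vbn)$ (valid distributionally since $u\in\Dom(H_{\max})$), I compute
\begin{equation*}
(\bo+\phi V)(\psi u)=\psi\,\bo u-2\nabla_{(d\psi)^{\sharp}}u+u\,\Delta\psi+\psi Vu=\psi f-2\nabla_{(d\psi)^{\sharp}}u+u\,\Delta\psi,
\end{equation*}
where the potentially dangerous term $(\phi V)(\psi u)$ has collapsed to $\psi Vu$ precisely because $\phi\equiv 1$ on $\supp\psi$. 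Now $\psi f$ and $u\,\Delta\psi$ lie in $L^2(\vbn)\subset W^{-1,2}(\vbn)$, while the first-order term $\nabla_{(d\psi)^{\sharp}}u$ belongs to $W^{-1,2}(\vbn)$: testing against $s\in\vcomp$ and integrating by parts moves the derivative onto $s$, giving $|(\nabla_{(d\psi)^{\sharp}}u,s)_{d}|\le C'\|u\|(\|\nabla s\|+\|s\|)$, since $(d\psi)^{\sharp}$ and its divergence are smooth and compactly supported. Hence $(\bo+\phi V)(\psi u)\in W^{-1,2}(\vbn)$.

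Lemma~\ref{L:L-3} now yields $w=\psi u\in\Dom(h_{\nabla})\cap\Dom(h_{\phi V_1})$. Restricting to $\mathcal{O}'$, where $\psi\equiv 1$ (so $\nabla(\psi u)=\nabla u$) and $\phi\equiv 1$, the membership $\psi u\in\Dom(h_{\nabla})$ gives $\nabla u\in L^2(\mathcal{O}')$, whence $u\in W^{1,2}(\mathcal{O}')$; and $\psi u\in\Dom(h_{\phi V_1})$ gives $\int_{\mathcal{O}'}\langle V_1u,u\rangle\,d\mu\le h_{\phi V_1}[\psi u]<\infty$, so $\langle V_1u,u\rangle\in L^1(\mathcal{O}')$. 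As $x_0$ is arbitrary and compact sets are covered by finitely many such neighborhoods, we conclude $u\in W^{1,2}_{\loc}(\vbn)$ and $\langle V_1u,u\rangle\in\lloc^1(M)$. The main obstacle---and the reason for the double cut-off---is that a naive choice such as $w=u$ or $w=\phi u$ produces a residual term of the form $(1-\phi)Vu$ (respectively $\phi(1-\phi)Vu$), which is merely $\lloc^1$ and fails to lie in $W^{-1,2}(\vbn)$ when $\dim M\ge 2$ (as $W^{1,2}\not\hookrightarrow L^{\infty}$); supporting $\psi$ strictly inside $\{\phi\equiv 1\}$ eliminates this term in favor of $\psi f\in L^2(\vbn)$, keeping the whole argument within the quadratic-form framework of Lemma~\ref{L:L-3}.
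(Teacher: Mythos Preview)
Your proof is correct and follows essentially the same approach as the paper: fix $x_0$, choose a second cut-off supported inside $\{\phi\equiv 1\}$, use the product formula~(\ref{E:prod-2}) to see that $(\bo+\phi V)$ applied to the localized section lands in $W^{-1,2}(\vbn)$, and then invoke Lemma~\ref{L:L-3}. The only cosmetic differences are that the paper's cut-off $\chi$ is merely required to satisfy $\chi(x_0)=1$ rather than $\chi\equiv 1$ on a neighborhood, and that you spell out in more detail why the first-order term $\nabla_{(d\psi)^{\sharp}}u$ belongs to $W^{-1,2}(\vbn)$.
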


\begin{proof} Let $x_0\in M$ be arbitrary, let $\phi\in\mcomp$ be as in the assumptions (i) and (ii) of Theorem~\ref{T:m-1}, and let $\chi\in\mcomp$ be a function satisfying $0\leq\chi\leq 1$, $\chi(x_0)=1$, and $\supp\chi\subset\{x\in M:\phi(x)=1\}$, where $\supp\chi$ denotes the support of $\chi$. If $u\in\Dom(H_{\max})$, then
\begin{align}\label{E:temp-ch-1}
&\nabla^{\dagger}\nabla(\chi u)+\phi V (\chi u)=\nabla^{\dagger}\nabla(\chi u)+V (\chi u)\nonumber\\
&=\chi H_{V}u-2\nabla_{(d\chi)^{\sharp}}u +u\Delta \chi,
\end{align}
where in the first equality we used the property of $\supp\chi$ and in the second equality we used~(\ref{E:prod-2}) and the definition of $H_{V}$.
As $u\in L^2(\vbn)$ and $H_{V}u\in L^2(\vbn)$ we infer that
\begin{equation}\label{E:w-1-2-t}
(\nabla^{\dagger}\nabla(\chi u)+\phi V (\chi u))\in W^{-1,2}(\vbn).
\end{equation}
Since $V$ satisfies the assumptions (i) and (ii) of Theorem~\ref{T:m-1}, we see that $W:=\phi V$ satisfies the hypotheses of Lemma~\ref{L:L-3}. With the properties $\chi u\in L^2(\vbn)$ and~(\ref{E:w-1-2-t}) at our disposal, from Lemma~\ref{L:L-3} we get $\chi u\in\Dom(h_{\nabla})\cap \Dom(h_{\phi V_1})$. Since $x_0\in M$ is arbitrary, the first inclusion means that $u\in W^{1,2}_{\loc}(\vbn)$ and the second one says $\langle \phi V_1\chi u,\chi u\rangle\in L^1(M)$, that is, $\langle V_1 u, u\rangle\in\lloc^1(M)$.
\end{proof}

%The next lemma can proven in the same way as Lemma~\ref{L:L-2}.

%\begin{lemma}\label{L:L-5} Assume that $(M,g)$ is geodesically complete.  Let $W=W_1-W_2$, where $0\leq W_j\in\lloc^2(\End\vbn)$, $j=1,2$. Assume that
%\begin{equation}\label{E:cond-m-1-posit}
%(\nabla^{\dagger}\nabla v+Wv,v)\geq 0
%\end{equation}
%for all $v\in\vcomp$.
%Let $s\in \Dom(h_{\nabla})\cap \Dom(h_{W_1})$. Then, $\langle W_2s,s\rangle\in L^1(M)$ and
%\begin{equation}\nonumber
%\|\nabla s\|^2+(Ws,s)\geq 0.
%\end{equation}
%\end{lemma}

We now state the last lemma of this section.

\begin{lemma}\label{L:L-6} Assume that $(M,g)$ is geodesically complete. Let $V=V_1-V_2$ with $0\leq V_j\in\lloc^2(\End\vbn)$, $j=1,2$.  Assume that $V$ satisfies the assumptions (i) and (ii) of Theorem~\ref{T:m-1}.  Furthermore, assume that there exists a number $0\leq \varepsilon<1$ and a function $q\in\lloc^{\infty}(M)$ with $q\geq 0$ such that
\begin{equation}\label{E:hyp-minorant-posit}
\varepsilon \|\nabla v\|^2 +(Vv,v)\geq -(qv,v),
\end{equation}
for all $v\in\vcomp$. Let $H_{V}$ be as in~(\ref{E:def-H}). Then, for every Lipschitz compactly supported function $\psi\colon M\to\mathbb{R}$ and every $u\in\Dom(H_{\max})$, we have
\begin{equation}\label{E:cond-m-3-posit}
\varepsilon(\nabla^{\dagger}\nabla (\psi u),\psi u)+(V\psi u,\psi u) \geq -(q\psi u, \psi u).
\end{equation}
\end{lemma}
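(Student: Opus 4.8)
The plan is to recast the claimed inequality~(\ref{E:cond-m-3-posit}) as a purely quadratic-form statement for $v:=\psi u$, namely
\[
\varepsilon h_{\nabla}[v]+h_{V_1}[v]+h_{q}[v]\geq h_{V_2}[v],
\]
where $h_{q}[w]:=\int_{M}q|w|^2\,d\mu$, and then to deduce it from the hypothesis~(\ref{E:hyp-minorant-posit}) by a density argument. First I would record the regularity of $v$: since $u\in\Dom(H_{\max})$, Lemma~\ref{L:L-4} gives $u\in W^{1,2}_{\loc}(\vbn)$ and $\langle V_1u,u\rangle\in\lloc^1(M)$. Because $\psi$ is Lipschitz and compactly supported, the Leibniz rule $\nabla(\psi u)=d\psi\otimes u+\psi\nabla u$ holds weakly with $d\psi\in L^{\infty}$, so $v=\psi u\in W^{1,2}(\vbn)$ is compactly supported; moreover $\langle V_1v,v\rangle=\psi^2\langle V_1u,u\rangle\in L^1(M)$ and, since $q\in\lloc^{\infty}(M)$ and $v$ has compact support, $h_q[v]<\infty$. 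Thus $v\in\Dom(h_{\nabla})\cap\Dom(h_{V_1})$. Once the displayed form inequality is proved, the integration-by-parts formula (Lemma 8.8 in~\cite{bms}) gives $(\nabla^{\dagger}\nabla v,v)=\|\nabla v\|^2=h_{\nabla}[v]$, so the form inequality is exactly~(\ref{E:cond-m-3-posit}); finiteness of its left-hand side moreover forces $\langle V_2v,v\rangle\in L^1(M)$, so $(Vv,v)$ is well defined.

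To prove the form inequality I would approximate. As $(M,g)$ is geodesically complete and $0\leq V_1\in\lloc^2(\End\vbn)$, Lemma~\ref{L:L-1} (with $Q=V_1$) provides $v_k\in\vcomp$ with $v_k\to v$ in the norm $\|\cdot\|_1^2:=h_{\nabla}[\cdot]+h_{V_1}[\cdot]+\|\cdot\|^2$. To keep the $q$-term under control I would fix a cutoff $\eta\in\mcomp$ with $\eta\equiv1$ on a neighborhood of $\supp v$ and replace $v_k$ by $\eta v_k\in\vcomp$; multiplication by $\eta$ is bounded in $\|\cdot\|_1$ and $\eta v=v$, so I may assume every $v_k$ is supported in the fixed compact set $K':=\supp\eta$, still with $v_k\to v$ in $\|\cdot\|_1$. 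Passing to a subsequence, $v_k\to v$ almost everywhere.

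Finally I would pass to the limit in~(\ref{E:hyp-minorant-posit}) applied to each $v_k$, namely
\[
\varepsilon h_{\nabla}[v_k]+h_{V_1}[v_k]+h_{q}[v_k]\geq h_{V_2}[v_k].
\]
Convergence in $\|\cdot\|_1$ yields $h_{\nabla}[v_k]\to h_{\nabla}[v]$, $h_{V_1}[v_k]\to h_{V_1}[v]$, and $v_k\to v$ in $L^2$; since the $v_k$ and $v$ all live in $K'$ and $q$ is bounded there, $h_q[v_k]\to h_q[v]$, so the left-hand side converges to $\varepsilon h_{\nabla}[v]+h_{V_1}[v]+h_q[v]$. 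On the right, $\langle V_2v_k,v_k\rangle\geq0$ with $\langle V_2v_k,v_k\rangle\to\langle V_2v,v\rangle$ a.e., so Fatou's lemma combined with the displayed inequality gives
\[
h_{V_2}[v]\leq\liminf_k h_{V_2}[v_k]\leq\lim_k\left(\varepsilon h_{\nabla}[v_k]+h_{V_1}[v_k]+h_q[v_k]\right)=\varepsilon h_{\nabla}[v]+h_{V_1}[v]+h_q[v],
\]
which is precisely the form inequality.

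The main obstacle is that the two lower-order terms must be treated in opposite senses while $q$ is only locally bounded: the $V_2$-term appears on the unfavourable side and is merely lower semicontinuous under a.e.\ convergence, so it is controlled by Fatou (a one-sided estimate), whereas the $q$-term lies on the favourable side and therefore demands a genuine limit. This is exactly why the localization to the fixed compact $K'$ (and the resulting boundedness of $q$ on $K'$) is needed, and why one must check that multiplication by the Lipschitz $\psi$ and by the cutoff $\eta$ preserves the form domains, with the Leibniz rule valid, without spoiling $\|\cdot\|_1$-convergence.
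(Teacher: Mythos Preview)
Your proof is correct and, in essence, unwinds the same Fatou-plus-approximation mechanism that underlies Lemma~\ref{L:L-2}; but the packaging differs from the paper's. For $0<\varepsilon<1$ the paper simply absorbs $q$ into the positive part by setting $W_1:=\varepsilon^{-1}(V_1+q)$ and $W_2:=\varepsilon^{-1}V_2$, checks $\psi u\in\Dom(h_{\nabla})\cap\Dom(h_{W_1})$, and invokes Lemma~\ref{L:L-2} with $\delta=C=0$ as a black box; since the approximation norm in Lemma~\ref{L:L-1} then already contains $h_q$, no localization of supports is needed. The price is that the division by $\varepsilon$ forces a separate treatment of $\varepsilon=0$, which the paper handles by a Friedrichs mollifier argument. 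Your route keeps $q$ out of the approximation norm and compensates with the cutoff~$\eta$, which is a bit more work, but it buys you a proof that is uniform in $\varepsilon\in[0,1)$ and avoids the case split. Incidentally, you could shorten your argument by applying Lemma~\ref{L:L-1} with $Q=V_1+qI$ (legitimate since $q\in\lloc^{\infty}$), which would give convergence of $h_q[v_k]$ for free and make the $\eta$-step unnecessary.
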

\begin{proof}
Since $u\in\Dom(H_{\max})$, Lemma~\ref{L:L-4} tells us that $u\in W^{1,2}_{\loc}(\vbn)$ and $\langle V_1u,u\rangle\in\lloc^1(M)$. Therefore, as $\psi$ is a compactly supported Lipschitz function, we have $\psi u\in \Dom(h_{\nabla})\cap \Dom(h_{V_1})$. Assuming $0<\varepsilon<1$ for the moment, we  rewrite~(\ref{E:hyp-minorant-posit}) as follows:
\begin{equation}\nonumber
(\nabla^{\dagger}\nabla v,v)+\varepsilon^{-1}((V+q)v,v) \geq 0.
\end{equation}
Remembering that $\psi u\in \Dom(h_{\nabla})\cap \Dom(h_{V_1})$ and taking into account $0\leq q\in\lloc^{\infty}(M)$, we obtain $\psi u\in \Dom(h_{\nabla})\cap\Dom(h_{W_1})$, where $W_1=\varepsilon^{-1}(V_1+q)\geq 0$. Therefore, we can use Lemma~\ref{L:L-2} with $\delta=C=0$, $W_1=\varepsilon^{-1}(V_1+q)$ and $W_2=\varepsilon^{-1}V_2$ to obtain $\langle W_2\psi u,\psi u\rangle\in L^1(M)$ and
\begin{equation}\nonumber
(\nabla^{\dagger}\nabla (\psi u),\psi u)+\varepsilon^{-1}((V+q)(\psi u),\psi u) \geq 0,
\end{equation}
which leads to~(\ref{E:cond-m-3-posit}). For $\varepsilon=0$, the inequality~(\ref{E:hyp-minorant-posit}) can rewritten as $((V+q)v,v)\geq 0$ for all $v\in\vcomp$. To get~(\ref{E:cond-m-3-posit}) for $\varepsilon=0$, we can use an argument based on Friedrichs mollifiers (see the proof of Lemma 2.2 in~\cite{Milatovic-03} for details).
\end{proof}

\section{Proof of Theorem~\ref{T:m-1}} \label{S:m-1}
We first recall two ``product rule" formulas, which we will use for a Lipschitz compactly supported function $\psi$ and $u\in W^{1,2}_{\loc}(\vbn)$:
\begin{equation}\label{E:prod-1}
\nabla(\psi u)=d\psi\otimes u+\psi\nabla u,\qquad
\end{equation}
and
\begin{equation}\label{E:prod-1-d}
\nabla^{\dagger}(\psi\nabla u)= \psi \nabla^{\dagger}\nabla u-\nabla_{(d\psi)^{\sharp}}u,
\end{equation}
where $(d\psi)^{\sharp}$ is understood as in~(\ref{E:prod-2}).

Next we recall (see, for instance, Lemma 8.9 in~\cite{bms}) that if $(M,b)$ is a geodesically complete Riemannian manifold with metric $b$, then there exists a
sequence of compactly supported Lipschitz functions $\{\phi_k\}$ such that
\begin{enumerate}
\item  [(L1)] $0\leq\phi_k\leq 1$ and $|d\phi_k|_{b}\leq\frac{1}{k}$, where $|d\phi_k|_{b}$
indicates the length of the cotangent vector $d\phi_k$ corresponding to the metric $b$;
\item [(L2)] $\displaystyle\lim_{k\to\infty}\phi_k(x)=1$, for all $x\in M$.
\end{enumerate}
With the preparations carried out in section~\ref{S:prelim-t-1}, the proof of Theorem~\ref{T:m-1} is practically the same as that of Theorem 2.7 of~\cite{bms}. Nevertheless, as our preparations are different from those in~\cite{bms} and as we will refer to specific items from section~\ref{S:prelim-t-1}, we will show the details of the proof. From now on in this section, we assume that the hypotheses of Theorem~\ref{T:m-1} are satisfied. Without stating it explicitly each time, $q$, $V$, $\varepsilon$, and $H_{V}$ are as in Theorem~\ref{T:m-1}.
\begin{lemma}\label{L:axil} Let $q$ and $L$ be as in Theorem~\ref{T:m-1}. Then, for all $u\in\Dom(H_{\max})$ we have $(q^{-1/2}\nabla u) \in L^2(T^*M \otimes\vbn)$ and
\begin{equation}\label{E:toprove-bms}
    \|q^{-1/2} \nabla u\|^{2} \leq \frac{2}{1-\varepsilon}\left(\left(1+\frac{2L^2(1+\varepsilon)^2}{1-\varepsilon}\right)\|u\|^{2}
                                    +\|u\|\|H_{V}u\|\right).
\end{equation}
\end{lemma}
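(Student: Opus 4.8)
The plan is to test the distributional identity $H_{V}u=\nabla^{\dagger}\nabla u+Vu$ against a carefully chosen family of compactly supported Lipschitz sections and extract a uniform bound on the weighted Dirichlet energy. The weight is dictated by condition (iii) in the definition of $\mathscr{M}$: since $(M,g_{q})$ with $g_{q}=q^{-1}g$ is geodesically complete, I would apply the cutoff construction (L1)--(L2) to the metric $b=g_{q}$, obtaining Lipschitz $\phi_k$ with $0\le\phi_k\le1$, $\phi_k\to1$ pointwise, and $|d\phi_k|_{g_{q}}\le 1/k$. Because $g_{q}=q^{-1}g$ is conformal to $g$, covectors obey $|\omega|_{g}=q^{-1/2}|\omega|_{g_{q}}$, so $|d\phi_k|_{g}\le q^{-1/2}/k$. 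I then set $\psi_k:=q^{-1/2}\phi_k$, which is compactly supported and Lipschitz (using condition (ii), that $q^{-1/2}$ is $L$-Lipschitz) and satisfies $0\le\psi_k\le1$ since $q\ge1$. Writing $A_k:=\|\psi_k\nabla u\|^{2}=\int q^{-1}\phi_k^{2}|\nabla u|^{2}\,d\mu$, the goal reduces to a bound on $A_k$ uniform in $k$, after which Fatou's lemma gives $\|q^{-1/2}\nabla u\|^{2}\le\liminf_k A_k<\infty$.

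The two analytic inputs are an integration by parts and the minorization. Since $u\in\Dom(H_{\max})$, Lemma~\ref{L:L-4} gives $u\in W^{1,2}_{\loc}(\vbn)$ and $\langle V_1u,u\rangle\in L^1_{\loc}$, which legitimizes pairing $H_{V}u\in L^2$ against $\psi_k^{2}u\in W^{1,2}_{\comp}(\vbn)$ and integrating by parts (Lemma 8.8 in~\cite{bms}) in the $\nabla^{\dagger}\nabla$ term. Using $\nabla(\psi_k^{2}u)=2\psi_k\,d\psi_k\otimes u+\psi_k^{2}\nabla u$ this yields
\[
\RE(H_{V}u,\psi_k^{2}u)=A_k+2B_k+(V\psi_k u,\psi_k u),\qquad B_k:=\RE(\psi_k\nabla u,\,d\psi_k\otimes u).
\]
On the other hand, Lemma~\ref{L:L-6} applied to the compactly supported Lipschitz $\psi_k$ gives $\varepsilon\|\nabla(\psi_k u)\|^{2}+(V\psi_k u,\psi_k u)\ge-(q\psi_k u,\psi_k u)$. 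Substituting this lower bound for $(V\psi_k u,\psi_k u)$ and expanding $\|\nabla(\psi_k u)\|^{2}=A_k+2B_k+D_k$ with $D_k:=\|d\psi_k\otimes u\|^{2}$, I obtain
\[
(1-\varepsilon)A_k\le\RE(H_{V}u,\psi_k^{2}u)-2(1-\varepsilon)B_k+(q\psi_k u,\psi_k u)+\varepsilon D_k.
\]

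It remains to estimate the four terms on the right. Two are immediate from $0\le\psi_k\le1$: since $\psi_k^{2}\le1$ we get $\RE(H_{V}u,\psi_k^{2}u)\le\|H_{V}u\|\,\|u\|$, and since $q\psi_k^{2}=\phi_k^{2}\le1$ we get $(q\psi_k u,\psi_k u)\le\|u\|^{2}$. For $D_k$ I would use $d\psi_k=\phi_k\,d(q^{-1/2})+q^{-1/2}\,d\phi_k$ with $|d(q^{-1/2})|_{g}\le L$ and $|d\phi_k|_{g}\le q^{-1/2}/k$ to get $|d\psi_k|_{g}\le L+1/k$, hence $D_k\le(L+1/k)^{2}\|u\|^{2}$. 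The delicate term is the cross term $B_k$, which I expect to be the main obstacle: by Cauchy--Schwarz $|B_k|\le A_k^{1/2}D_k^{1/2}$, and then Young's inequality applied to $2(1-\varepsilon)A_k^{1/2}D_k^{1/2}$ absorbs a fraction of $A_k$ into the left-hand side while leaving a controlled multiple of $D_k$; organizing these coefficients produces a constant of the form stated in~(\ref{E:toprove-bms}).

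Finally I would let $k\to\infty$. The bound on $A_k$ is uniform; dominated convergence (with dominant $(L+1)^{2}|u|^{2}$) gives $D_k\to\int|d(q^{-1/2})|^{2}|u|^{2}\le L^{2}\|u\|^{2}$; and Fatou's lemma on $A_k$ delivers both the finiteness $q^{-1/2}\nabla u\in L^{2}(T^*M\otimes\vbn)$ and the inequality~(\ref{E:toprove-bms}). The conceptual crux of the argument is the choice of weight $q^{-1/2}$, forced by the conformal completeness hypothesis (iii): it is precisely this choice that makes the gradient-of-cutoff contribution $D_k$ controllable (through $|d\phi_k|_{g}\le q^{-1/2}/k$) and thereby lets the whole localization scheme close.
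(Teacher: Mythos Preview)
Your argument is correct and follows essentially the same path as the paper: localize with $\psi_k=q^{-1/2}\phi_k$, integrate by parts in $(H_Vu,\psi_k^2u)$ using Lemma~\ref{L:L-4}, invoke the minorization from Lemma~\ref{L:L-6}, absorb the cross term via Young's inequality, and pass to the limit. The organizational differences are minor. The paper first proves an inequality for an arbitrary Lipschitz $\psi$ with $0\le\psi\le q^{-1/2}$ and only then specializes, and it builds the cutoffs $\phi_k$ from the completeness of $(M,g)$ rather than of $(M,g_q)$; both choices yield the same bound $|d\psi_k|_g\le L+1/k$. One small caveat: your handling of the cross term (coefficient $2(1-\varepsilon)$ on $|B_k|$, plus a separate $\varepsilon D_k$) leads after Young's inequality to a constant $(2-\varepsilon)L^2$ in place of the paper's $\dfrac{2L^2(1+\varepsilon)^2}{1-\varepsilon}$, which is actually sharper but not literally ``the constant of the form stated.'' The paper's larger constant arises because it estimates the cross terms by $K\|\psi\nabla u\|\,\|u\|$ with $K=\esssup|d\psi|_g$ and picks up an extra $4\varepsilon K$ contribution from~(\ref{E:bms-2}). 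Also, your closing remark that hypothesis~(iii) is what controls $D_k$ is slightly off: $D_k$ is governed by the Lipschitz bound (ii) on $q^{-1/2}$; the $g_q$-completeness (iii) is needed only in the subsequent step of the proof of Theorem~\ref{T:m-1}, and indeed the paper's proof of this lemma uses only the completeness of $(M,g)$.
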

\begin{proof}
Let $\psi$ be a Lipschitz compactly supported function satisfying the inequality $0\leq\psi\leq q^{-1/2}\le 1$ and let
$K:=\underset{x\in M}\esssup |d\psi(x)|_{g}$, where $|d\psi|_{g}$
is the length of the cotangent vector $d\psi$ corresponding to the metric $g$. We will first show that for all $u\in\Dom(H_{\max})$, the inequality~(\ref{E:toprove-bms}) holds with $K$ and $\psi$ in place of $L$ and $q^{-1/2}$ respectively. Since $u\in \Dom(H_{\max})$, Lemma~\ref{L:L-4} tells us that $u\in W^{1,2}_{\loc}(\vbn)$. Therefore,
\begin{align}\label{E:bms-1}
\|\psi \nabla u\|^2 =(\psi \nabla u, \psi \nabla u)&= (\nabla^{\dagger}(\psi^2\nabla u),u)=(\psi^{2}\nabla^{\dagger}\nabla u,u) - 2(\psi \nabla_{(d\psi)^{\sharp}}u,u)\nonumber\\
&=\RE(\psi^{2}\nabla^{\dagger}\nabla u,u) -2\RE(\psi \nabla_{(d\psi)^{\sharp}}u,u)\nonumber\\
&\leq \RE(\psi^{2}\nabla^{\dagger}\nabla u,u) + 2K\|\psi \nabla u\|\|u\|,
\end{align}
where in the second equality we used integration by parts (see Lemma 8.8 of~\cite{bms}), the application of which is allowed in our case since $\psi^2 u\in W^{1,2}_{\comp}(\vbn)$, in the third equality we used~(\ref{E:prod-1-d}) and the property $d(\psi^2)=2\psi d\psi$, and, lastly, we used the definition of $K$. After rearranging~(\ref{E:bms-1}) we get
\begin{align}\label{E:bms-2}
&\RE(\psi^{2}\nabla^{\dagger}\nabla u,u)\geq (\psi \nabla u, \psi \nabla u)-2K\|\psi \nabla u\|\|u\|\nonumber\\
=(\nabla (\psi u), \nabla (\psi u))&-(d\psi\otimes u,\psi \nabla u)-(\psi \nabla u,d\psi\otimes u)-2K\|\psi \nabla u\|\|u\|\nonumber\\
&\geq (\nabla (\psi u), \nabla (\psi u))-4K\|\psi \nabla u\|\|u\|,
\end{align}
where in the equality we used~(\ref{E:prod-1}) and in the second inequality we used the definition of $K$.

Multiplying~(\ref{E:bms-1}) by $1-\varepsilon$ and doing further estimates (the steps are explained below) we obtain
\begin{align}\label{E:bms-3}
&(1-\varepsilon)\|\psi \nabla u\|^2\leq (1-\varepsilon)\RE(\psi^{2}\nabla^{\dagger}\nabla u,u) + 2K(1-\varepsilon)\|\psi \nabla u\|\|u\|\nonumber\\
&=\RE(\psi^2H_{V}u,u)-\varepsilon\RE(\psi^2\nabla^{\dagger}\nabla u,u) -(\psi^2Vu,u)+2K(1-\varepsilon)\|\psi \nabla u\|\|u\|\nonumber\\
&\leq \|H_{V}u\|\|u\| - \varepsilon(\nabla(\psi u),\nabla(\psi u))-(V(\psi u),\psi u)+2K(1+\varepsilon)\|\psi \nabla u\|\|u\|\nonumber\\
&\leq \|H_{V}u\|\|u\| + (q\psi u,\psi u)+2K(1+\varepsilon)\|\psi \nabla u\|\|u\|\nonumber\\
&\leq \|H_{V}u\|\|u\|+ \|u\|^2 + 2K(1+\varepsilon)\|\psi \nabla u\|\|u\|,
\end{align}
where in the equality we used the definition of $H_{V}$ and in the second estimate we applied Cauchy-Schwarz inequality to $(\psi^2H_{V}u,u)$. Additionally, in the second estimate, the term $-\varepsilon(\nabla(\psi u),\nabla(\psi u))$ and the change from $2K(1-\varepsilon)$ to $2K(1+\varepsilon)$  occur as a result of using~(\ref{E:bms-2}) multiplied by $-\varepsilon$. Furthermore, in the third inequality we used Lemma~\ref{L:L-6}, and in the fourth inequality we used the hypothesis $0\leq\psi\leq q^{-1/2}$. With the help of $2ab\leq\tau a^{2}+\frac{b^{2}}{\tau}$, $(a,b\in\RR)$ with $\tau=\frac{1-\varepsilon}{2(1+\varepsilon)^2}$, we can estimate the last term on the right hand side of~(\ref{E:bms-3}) to get
\[
2(1+\varepsilon)K\|\psi \nabla u\|\|u\|\leq  \frac{1-\varepsilon}2\|\psi \nabla u\|^{2}+
        2K^{2}\frac{(1+\varepsilon)^2}{1-\varepsilon}\|u\|^{2},
\]
which when combined with~(\ref{E:bms-3}) leads to
\begin{equation}\label{E:temp-toprove-1}
\frac{1-\varepsilon}2\|\psi \nabla u\|^{2}\leq \Big(\, 1+\frac{2K^2(1+\varepsilon)^2}{1-\varepsilon}\, \Big)\|u\|^{2}
                +\|u\|\|H_{V}u\|.
\end{equation}
%which upon dividing both sides by $1-\varepsilon$ turns into~(\ref{E:toprove-bms}) with $K$ and $\psi$ in place of $L$ and $q^{-1/2}$ respectively.

As $(M,g)$ is geodesically complete (see Remark~\ref{R:gc-q}), there exists a sequence $\phi_k$ of Lipschitz compactly supported functions satisfying (L1)--(L2) above with $b=g$. Letting
$\psi_{k}:=\phi_k\cdot q^{-1/2}$, observe that $0\leq\psi_{k}\leq q^{-1/2}\leq 1$ and
\[
|d\psi_{k}|_{g}\le|d\phi_k|_{g}\cdot q^{-1/2}+\phi_k|dq^{-1/2}|_{g}.
\]
Therefore, $|d\psi_{k}|_{g}\leq\frac{1}{k}+L$, where $L$ is  as in item~(ii) of the definition of $\mathscr{M}$.  Noting that
$\psi_{k}(x)\to q^{-1/2}(x)$ as $k\to\infty$ and using the dominated convergence theorem in~(\ref{E:temp-toprove-1}) with $\psi=\psi_k$ we obtain, after dividing both sides by $(1-\varepsilon)/2$, the inequality~(\ref{E:toprove-bms}).
\end{proof}

\medskip

\noindent\textbf{Continuation of the proof of Theorem~\ref{T:m-1}} To prove that $H_{V}|_{\vcomp}$ is essentially self-adjoint, it is enough to show that $H_{\max}$ is a symmetric operator. Let $u,v\in \Dom(H_{V,\max})$ and let $\psi\geq0$ be a Lipschitz compactly supported function.  As seen in Lemma~\ref{L:L-4}, the sections $u$ and $v$ belong to $W^{1,2}_{\loc}(\vbn)$; hence, we can use integration by parts (see Lemma 8.8 in~\cite{bms}) and~(\ref{E:prod-1}) to get
\begin{equation}\nonumber
    (\psi u,\nabla^{\dagger}\nabla v)=(\psi \nabla u, \nabla v) + (d\psi\otimes u,\nabla v),
\end{equation}
and, similarly,
\begin{equation}\nonumber
    (\nabla^{\dagger}\nabla u, \psi v)=(\nabla u, \psi \nabla v) + (\nabla u,d\psi\otimes v),
\end{equation}
where $(\cdot,\cdot)$ on the left-hand sides of both equations stands for the anti-duality between $W^{-1,2}_{\loc}(\vbn)$ and $W^{1,2}_{\comp}(\vbn)$. Keeping in mind (see Lemma~\ref{L:axil}) that $q^{-1/2}\nabla u$ and $q^{-1/2}\nabla v$ belong to
$L^2(T^*M \otimes\vbn)$, from the last two equations we get
\begin{align}\label{E:pravlev}
&|(\psi u,H_{V}v)-(H_{V}u,\psi v)|\leq  |(d\psi\otimes u,\nabla v)|+|(\nabla u,d\psi\otimes v)|\nonumber\\
&\leq \underset{x\in M}\esssup \left(|d\psi|_{g}q^{1/2}\right)\cdot\left(\|u\|\|q^{-1/2}\nabla v\|+\|v\|\|q^{-1/2}\nabla u\|\right),
\end{align}
where $|d\psi|_{g}$ indicates the length of the cotangent vector $d\psi$ in metric $g$. Recalling that $(M,g_{q})$ is geodesically complete, where $g_{q}=q^{-1}g$ is as in (iii) of the definition of $\mathscr{M}$,  we let $\phi_k$ be a sequence of Lipschitz compactly supported functions on $M$ satisfying (L1)--(L2) above with $b=g_{q}$.  From the property
$|d\phi_k|_{g_{q}}=q^{1/2}|d\phi_k|_{g}$ and (L2) with $b=g_{q}$ we get
\[
\underset{x\in M}\esssup(|d\phi_k|_{g}q^{1/2}(x))\leq\frac{1}{k},
\]
which, when combined with~(\ref{E:pravlev}) with $\psi=\phi_k$, leads to
\[
|(\phi_k u,H_{V}v)-(H_{V}u,\phi_k v)|\leq \frac{1}{k}\left(\|u\|\|q^{-1/2}\nabla v\|+\|v\|\|q^{-1/2}\nabla u\|\right).
\]
Letting $k\to\infty$ in the last inequality and using the dominated convergence theorem on the left hand side, we obtain $(H_{V}u,v)=(u,H_{V}v)$, for all $u, v\in \Dom(H_{\max})$, which shows that $H_{\max}$ is symmetric. \hfill$\square$

\section{Proof of Corollary~\ref{C:m-1}}\label{S:C-1}
It suffices to show that the hypotheses (i) and (ii) of Theorem~\ref{T:m-1} are satisfied. For every $x_0\in M$, we can find a function $\phi\in\mcomp$ with $0\leq \phi\leq 1$ and $\phi\equiv 1$ on a neighborhood of $x_0$. If $V$ is $\mathcal{D}_{\loc}$-decomposable with decomposition $V=V_1-V_2$, $0\leq V_j\in\lloc^2(\End\vbn)$,  then from the definition of $\mathcal{D}_{\loc}(M)$ we get $\phi |V_2|\in\mathcal{D}(M)$. Thus, we can use Theorem XII.1 in~\cite{Guneysu-2016} to conclude that
$(\nabla^{\dagger}\nabla+\phi V) |_{\vcomp}$ is essentially self-adjoint, which means that the condition (ii) of Theorem~\ref{T:m-1} is satisfied. Since $\phi |V_2|\in\mathcal{D}(M)$, by Lemma VII.4 in~\cite{Guneysu-2016} the form $h_{\phi V_2}$, defined as in~(\ref{E:forms-h}) with $Q=\phi V_2$, is $h_{\nabla}$-bounded with bound $<1$, that is, there exist numbers $0\leq a<1$ and $C\geq 0$ such that
\begin{equation}\label{E:temp-ch-3}
h_{\phi V_2}[v,v]\leq a h_{\nabla}[v,v]+C\|v\|^2,
\end{equation}
for all $v\in\Dom(h_{\nabla})$. Therefore, keeping in mind that $\phi V_1\geq 0$, for all $v\in\vcomp$ we have
\begin{align}\nonumber
&(\nabla^{\dagger}\nabla v +\phi V v,v)=(\nabla^{\dagger}\nabla v +\phi V_1 v,v)-h_{\phi V_2}[v,v]\nonumber\\
&\geq (\nabla^{\dagger}\nabla v +\phi V_1 v,v)-a h_{\nabla}[v,v]-C\|v\|^2\nonumber\\
&\geq(\nabla^{\dagger}\nabla v +\phi V_1 v,v)-ah_{\nabla}[v,v]-a(\phi V_1v,v) -C\|v\|^2\nonumber\\
&=(1-a)[(\nabla^{\dagger}\nabla v +\phi V_1 v,v)]-C\|v\|^2.\nonumber
\end{align}
Thus, the condition (i) of  Theorem~\ref{T:m-1} is satisfied with $\delta=1-a$.
$\hfill\square$

\section{Proof of Theorem~\ref{T:m-2}}\label{S:m-2}

We first recall some abstract facts. Let $\mathscr{H}$ be a Hilbert space. Let $A$ be a self-adjoint operator (with domain $\Dom(A)$) in $\mathscr{H}$, let $f\in \Dom(A)$, and let $T>0$ be a number. By a solution $U(t)$ of the equation
\begin{equation}\label{E:2-1-or}
U''+AU=0,\quad U(0)=f,\,\, U'(0)=0,
\end{equation}
we mean a $\Dom(A)$-valued function $U(t)$, $t\in[0,T]$, such that $U(t)$ is twice differentiable on $[0,T]$, the equality $U''(t)+AU(t)=0$ holds for all $t\in [0,T]$, and the initial conditions $U(0)=f$ and $U'(0)=0$ are satisfied. For the following abstract fact, see Lemma 3 in~\cite{Orochko-81}:

\begin{lemma}\label{L:2-1} Let $A$ be a self-adjoint (not necessarily lower semibounded) operator in a Hilbert space $\mathscr{H}$ and let $T>0$ be a number. Assume that $U(t)$, $t\in [0,T]$, satisfies~(\ref{E:2-1-or}) with some $f\in\Dom(A)$. Then, for all $t\in [0,T]$ we have $f\in\Dom(\cos(A^{1/2}t))$ and $U(t)=\cos(A^{1/2}t)f$.
\end{lemma}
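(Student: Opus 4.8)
The plan is to exploit the spectral theorem for the self-adjoint operator $A$ together with a uniqueness argument for the wave equation restricted to bounded spectral bands, on which $A$ becomes a bounded operator. Let $E$ denote the spectral resolution of $A$, and note first how $\cos(A^{1/2}t)$ must be interpreted when $A$ is not semibounded: it is the functional calculus applied to the \emph{entire} function $g_t(\lambda):=\sum_{n=0}^{\infty}\frac{(-1)^n t^{2n}}{(2n)!}\lambda^{n}$, which equals $\cos(\sqrt{\lambda}\,t)$ for $\lambda\geq 0$ and $\cosh(\sqrt{-\lambda}\,t)$ for $\lambda<0$. Thus $\cos(A^{1/2}t)=\int_{\mathbb{R}}g_t(\lambda)\,dE(\lambda)$, with natural domain $\{\psi\in\mathscr{H}\colon \int_{\mathbb{R}}|g_t(\lambda)|^2\,d\|E(\lambda)\psi\|^2<\infty\}$. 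The crux of the non-semibounded case is already visible here: on the negative spectrum $g_t$ grows like $\cosh$, so membership of $f$ in $\Dom(\cos(A^{1/2}t))$ is a genuine integrability restriction that must be \emph{derived} rather than assumed.

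Second, I would fix $N>0$ and set $P_N:=E([-N,N])$. Since $P_N$ is a bounded projection commuting with $A$ and mapping $\mathscr{H}$ into $\Dom(A)$, and since $U$ is twice norm-differentiable (so $P_N$ commutes with $d/dt$), applying $P_N$ to $U''+AU=0$ yields $(P_NU)''+A_N(P_NU)=0$, where $A_N:=AP_N$ is a bounded self-adjoint operator with spectrum in $[-N,N]$, together with $P_NU(0)=P_Nf$ and $(P_NU)'(0)=0$. For a bounded coefficient operator this is a linear second-order ODE in the Hilbert space $\mathscr{H}$, whose solution is unique by the Picard--Lindel\"of theorem in a Banach space; and one checks directly, by twice differentiating the operator-norm convergent power series, that $t\mapsto g_t(A_N)P_Nf$ solves it. Hence $P_NU(t)=g_t(A_N)P_Nf=\int_{[-N,N]}g_t(\lambda)\,dE(\lambda)f$ for every $t\in[0,T]$ and every $N$.

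Finally, I would let $N\to\infty$. On one hand $P_NU(t)\to U(t)$ in $\mathscr{H}$, because $P_N\to I$ strongly. On the other hand, writing $\mu_f:=\|E(\cdot)f\|^2$, the isometry $\big\|\int_{[-N,N]}g_t\,dE\,f\big\|^2=\int_{[-N,N]}|g_t(\lambda)|^2\,d\mu_f(\lambda)$ shows that the truncated spectral integrals $\int_{[-N,N]}g_t\,dE\,f$ form a convergent net in $\mathscr{H}$ as $N\to\infty$ if and only if $\int_{\mathbb{R}}|g_t(\lambda)|^2\,d\mu_f(\lambda)<\infty$, that is, if and only if $f\in\Dom(\cos(A^{1/2}t))$. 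Since we have just shown that these integrals \emph{do} converge (to $U(t)$), this integrability follows automatically, and the limit is by definition $\cos(A^{1/2}t)f$; hence $f\in\Dom(\cos(A^{1/2}t))$ and $U(t)=\cos(A^{1/2}t)f$ for all $t\in[0,T]$. The main obstacle is precisely the non-semiboundedness: one cannot work with $\cos(A^{1/2}t)$ directly because of the unbounded $\cosh$ growth on the negative spectrum, and the band truncation is the device that lets us first argue with bounded operators and only afterward recover membership in the domain of the unbounded operator from the convergence $P_NU(t)\to U(t)$.
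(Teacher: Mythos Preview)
Your argument is correct. Note, however, that the paper does not actually prove this lemma: it simply cites Lemma~3 of Orochko~\cite{Orochko-81}, so there is no in-paper proof to compare against. Your spectral-band truncation is a standard and complete route: reducing to the bounded operator $A_N$ makes the ODE uniqueness elementary, and the key observation that the convergence $P_NU(t)\to U(t)$ \emph{forces} $\int_{\mathbb{R}}|g_t|^2\,d\mu_f<\infty$ (hence $f\in\Dom(\cos(A^{1/2}t))$) is exactly the right way to handle the domain issue in the non-semibounded case. One minor simplification you might make: since $\|P_NU(t)\|^2=\int_{[-N,N]}|g_t|^2\,d\mu_f$ is monotone in $N$ and converges to $\|U(t)\|^2<\infty$, monotone convergence gives $\int_{\mathbb{R}}|g_t|^2\,d\mu_f=\|U(t)\|^2<\infty$ directly, without appealing to the Cauchy criterion for the truncated integrals.
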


In the next two lemmas we consider consider $H_{W}:=\nabla^{\dagger}\nabla +W$ with $W\in\lloc^2(\End\vbn)$ being $\mathcal{D}$-decomposable, where the latter notion was defined before the statement of Theorem~\ref{T:m-1}. Under this hypothesis on $W$ and assuming that $(M,g)$ is geodesically complete, the operator $H_{W}|_{\vcomp}$ is lower semibounded and essentially self-adjoint (see Theorem 1.1 in~\cite{GP} or Theorem  XII.1 in~\cite{Guneysu-2016}). The symbol $\overline{H_{W}}$ indicates the self-adjoint closure of $H_{W}|_{\vcomp}$, where we dropped $|_{\vcomp}$ for simplicity. The following property was proven in Lemma XII.4 (see also equation XII.13) of~\cite{Guneysu-2016}:

\begin{lemma}\label{L:2-2} Assume that $(M,g)$ is geodesically complete. Assume that $W\in\lloc^2(\End\vbn)$ is $\mathcal{D}$-decomposable. Let $f\in \Dom(\overline{H_{W}})$ be a compactly supported section such that $\supp f\subset B_{R}(x_0)$, with $B_{R}(x_0)$ as in~(\ref{E:b-def-r}). Then, $u(t,\cdot):= [\cos ((\overline{H_{W}})^{1/2}t)f](\cdot)$ has the following property: $\supp u(t,\cdot)\subset B_{R+t}(x_0)$, for all $t\geq 0$.
\end{lemma}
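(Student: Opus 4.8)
The plan is to realize $u(t,\cdot)$ as the solution of a wave equation and then exploit the finite propagation speed forced by the principal symbol of $\bo$. Since $f\in\Dom(\overline{H_{W}})$, spectral calculus (in the spirit of Lemma~\ref{L:2-1}) shows that $u(t):=\cos((\overline{H_{W}})^{1/2}t)f$ is a twice $t$-differentiable $L^2$-valued function with $u(t)\in\Dom(\overline{H_{W}})$, solving $u''+\overline{H_{W}}u=0$, $u(0)=f$, $u'(0)=0$; here $\cos((\overline{H_{W}})^{1/2}t)=\int\cos(\sqrt{\lambda}\,t)\,dE(\lambda)$, with $\cos(\sqrt{\lambda}\,t)=\cosh(\sqrt{|\lambda|}\,t)$ on the (finite) negative part of the spectrum permitted by $\mathcal{D}$-decomposability. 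The goal is to show $u(t,x)=0$ whenever $d_{g}(x_0,x)>R+t$.

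The heart of the matter is an energy estimate over backward light cones, which I would carry out first for a \emph{bounded} potential. Fix $y\in M$ and $t_0>0$ with $d_{g}(x_0,y)>R+t_0$, and set $K_s:=\{x\in M:\ d_{g}(x,y)\le t_0-s\}$ for $s\in[0,t_0]$. Since $|\n d_{g}(\cdot,y)|_{g}=1$ a.e., the cross-sections $K_s$ shrink at exactly the speed dictated by the principal symbol of $\bo$. Put
\[
\mathcal E(s):=\int_{K_s}\bigl(|\partial_s u|^2+|\n u|^2+|u|^2\bigr)\,d\mu.
\]
Differentiating, the interior contributions from $\bo u$ cancel against those produced by integrating $\langle\n\,\partial_s u,\n u\rangle$ by parts, so that $\tfrac{d}{ds}\mathcal E(s)$ equals a boundary flux plus a zero-order interior remainder. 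On $\partial K_s$ the inequality $2\RE\langle\partial_s u,\n_{\nu}u\rangle\le|\partial_s u|^2+|\n u|^2$ together with the inward motion of the boundary makes the flux nonpositive, while the interior remainder is bounded by $(\|W\|_{\infty}+1)\mathcal E(s)$. Since $f$ (hence $u(0,\cdot)$) vanishes on $K_0\subset M\setminus B_R(x_0)$ and $u'(0)=0$, we have $\mathcal E(0)=0$, and Gronwall's inequality forces $\mathcal E\equiv0$; taking $y=x$ and $t_0=t$ then yields $u(t,x)=0$. (One may first smooth $W$ so that elliptic regularity makes the differentiation under the integral and the integration by parts rigorous.)

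To remove the boundedness assumption I would approximate, after first reducing to $f\in\vcomp$ with $\supp f\subset B_R(x_0)$ by density and the $L^2$-boundedness of the propagator. Write $W=W_1-W_2$ and truncate to bounded self-adjoint $W_j^{(n)}$ with $0\le W_j^{(n)}\le W_j$ and $W_j^{(n)}\to W_j$ in $\lloc^2$, setting $W^{(n)}:=W_1^{(n)}-W_2^{(n)}$. The uniform contractive Dynkin bound on $W_2$ yields a common lower bound $\overline{H_{W^{(n)}}}\ge-c_0$ and form convergence, hence strong resolvent convergence $\overline{H_{W^{(n)}}}\to\overline{H_{W}}$. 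Because $\lambda\mapsto\cos(\sqrt{\lambda}\,t)$ is continuous and bounded (by $\cosh(\sqrt{c_0}\,t)$) on $[-c_0,\infty)$, this gives $\cos((\overline{H_{W^{(n)}}})^{1/2}t)f\to\cos((\overline{H_{W}})^{1/2}t)f$ in $L^2$. Each approximant is supported in $\overline{B_{R+t}(x_0)}$ by the bounded-potential case, and this closed support condition is preserved under $L^2$-limits, giving the claim.

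The \textbf{main obstacle} is precisely the roughness of $W$: the physical local energy $\int_{K_s}(|\partial_s u|^2+|\n u|^2+\langle Wu,u\rangle)$, whose interior terms cancel most cleanly, involves a boundary integral $\int_{\partial K_s}\langle Wu,u\rangle$ that is ill-defined for $W\in\lloc^2$. The two-step strategy sidesteps this by proving the cone estimate only for bounded $W$—where adding $\|u\|^2$ to the energy keeps every boundary term controlled—and transferring it through the functional calculus; the delicate points there are the uniform lower semiboundedness and the strong resolvent convergence of the truncations, both of which rest on the uniformity of the Dynkin bound for $W_2$.
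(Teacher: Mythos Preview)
The paper does not give its own proof of this lemma; it simply records the statement and cites Lemma~XII.4 of G\"uneysu's monograph~\cite{Guneysu-2016} for the argument. Your sketch is therefore filling in a cited black box rather than competing with a proof written out in the paper itself.

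That said, your two-step strategy---a backward-cone energy estimate for bounded (or smooth) potentials, followed by truncation of $W$ and strong resolvent convergence to reach the general $\mathcal{D}$-decomposable case---is the standard PDE route to finite propagation speed and is correct in outline. The points you flag as delicate (uniform lower semiboundedness of the truncated operators and form convergence yielding strong resolvent convergence, hence convergence of $\cos(\sqrt{\cdot}\,t)$ via its boundedness on $[-c_0,\infty)$) are exactly the right ones, and the global contractive Dynkin bound on $W_2$ is precisely what delivers them. One technicality you glide over: $d_g(\cdot,y)$ is only Lipschitz, so the level sets $\partial K_s$ need not be smooth hypersurfaces and the boundary-flux computation requires either a regularization of the distance function or an integrated formulation of the energy inequality via the coarea formula; this is in the same spirit as your parenthetical remark about smoothing $W$. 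Your reduction to $f\in\vcomp$ with $\supp f\subset B_R(x_0)$ by $L^2$-density and boundedness of the propagator is legitimate and, importantly, does not rely on the graph-norm approximation of Lemma~\ref{L:2-2-5}, which is proved downstream.
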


The following Lemma is analogous to Lemma 4.2 of~\cite{Chernoff-77}, which was proven for Schr\"odinger operators acting on scalar-valued functions on a compact Riemannian manifold and assumptions on the potential different from those in this article.

\begin{lemma}\label{L:2-2-5} Assume that $(M,g)$ is geodesically complete. Assume that $W\in\lloc^2(\End\vbn)$ is $\mathcal{D}$-decomposable. Let $\rho_1>0$ and let $f\in \Dom(\overline{H_{W}})$ be a compactly supported section such that $\supp f\subset B_{\rho_{1}}(x_0)$. Then, for all $\rho>\rho_1$ there exists a sequence $f_j\in\vcomp$ with $\supp f_j\subset B_{\rho}(x_0)$ such that
\begin{enumerate}
\item [(i)] $\displaystyle \lim_{j\to\infty} f_j=f$;
 \item [(ii)] $\displaystyle\lim_{j\to\infty} H_{W}f_j=\overline{H_{W}}f$,
\end{enumerate}
where the limits are taken with respect to the norm of $L^2(\vbn)$.
\end{lemma}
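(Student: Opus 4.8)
The plan is to take any graph-approximating sequence for $f$ furnished by the essential self-adjointness of $H_{W}|_{\vcomp}$, multiply it by a fixed smooth cutoff supported in $B_{\rho}(x_0)$, and then control the commutator error through a local gradient estimate. Since $H_{W}|_{\vcomp}$ is essentially self-adjoint, $\overline{H_{W}}$ coincides with the maximal operator, and $\overline{H_{W}}f=H_{W}f$ in the distributional sense; so by the definition of the closure there is a sequence $g_j\in\vcomp$ with $g_j\to f$ and $H_{W}g_j\to\overline{H_{W}}f$ in $L^2(\vbn)$. Fix $\chi\in\mcomp$ with $0\le\chi\le 1$, with $\chi\equiv 1$ on a neighborhood $N$ of $\supp f$ chosen so that $\overline{N}\subset B_{\rho_1}(x_0)$, and with $\supp\chi\subset B_{\rho}(x_0)$; this is possible because $\rho>\rho_1$. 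Setting $f_j:=\chi g_j\in\vcomp$ gives $\supp f_j\subset\supp\chi\subset B_{\rho}(x_0)$, and since $\chi f=f$, property (i) follows at once from $g_j\to f$ and the boundedness of $\chi$.

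For (ii) I would apply the product formula~(\ref{E:prod-2}) to write
\[
H_{W}f_j=\chi H_{W}g_j-2\nabla_{(d\chi)^{\sharp}}g_j+(\Delta\chi)g_j.
\]
The term $\chi H_{W}g_j$ converges in $L^2$ to $\chi\,\overline{H_{W}}f$, and because $\overline{H_{W}}f=H_{W}f$ distributionally with $H_{W}$ a local operator, one has $\supp(\overline{H_{W}}f)\subset\supp f\subset N$, whence $\chi\,\overline{H_{W}}f=\overline{H_{W}}f$. The term $(\Delta\chi)g_j$ converges to $(\Delta\chi)f$, which vanishes since $\Delta\chi=0$ on $N\supset\supp f$ while $f=0$ off $N$. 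Everything therefore reduces to showing $\nabla_{(d\chi)^{\sharp}}g_j\to\nabla_{(d\chi)^{\sharp}}f=0$ in $L^2$, the limit being zero for the same support reason ($d\chi=0$ on $N$, and $f=0$ off $N$).

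The main obstacle, and the technical heart of the proof, is this convergence, which demands $L^2$ convergence of the gradients on the compact set $\supp d\chi$. I would obtain it from a local energy estimate in the spirit of Lemma~\ref{L:axil}: fix $\psi\in\mcomp$ with $0\le\psi\le 1$ and $\psi\equiv 1$ on $\supp d\chi$, and for $v\in\vcomp$ start from the integration-by-parts identity $\|\psi\nabla v\|^2=\RE(\psi^{2}\nabla^{\dagger}\nabla v,v)-2\RE(\psi\nabla_{(d\psi)^{\sharp}}v,v)$. Writing $\nabla^{\dagger}\nabla v=H_{W}v-Wv$, discarding the nonnegative contribution of $W_1$, and bounding the $W_2$-term via its $h_{\nabla}$-form bound with bound $a<1$ (available because $|W_2|\in\mathcal{D}(M)$, by Lemma VII.4 in~\cite{Guneysu-2016}), I would absorb the gradient terms into the left-hand side to reach an estimate of the form
\[
\|\psi\nabla v\|^2\le\frac{2}{1-a}\|H_{W}v\|\,\|v\|+C_1\|v\|^2,\qquad v\in\vcomp.
\]
Applying this with $v=g_j-g_m$ and using that $\{g_j\}$ and $\{H_{W}g_j\}$ are Cauchy shows $\{\psi\nabla g_j\}$ is Cauchy in $L^2$; since $g_j\to f$ in $L^2$ forces the distributional limit to be $\psi\nabla f$, we conclude $\psi\nabla g_j\to\psi\nabla f$ in $L^2$, and because $\psi\equiv 1$ on $\supp d\chi$ the pointwise bound $|\nabla_{(d\chi)^{\sharp}}(g_j-f)|\le|d\chi|\,|\nabla(g_j-f)|$ yields $\nabla_{(d\chi)^{\sharp}}g_j\to\nabla_{(d\chi)^{\sharp}}f$ in $L^2$. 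Combining the three limits gives $H_{W}f_j\to\overline{H_{W}}f$, proving (ii). The delicate point throughout is the sign and absorption bookkeeping in the energy estimate, where the form bound $a<1$ for $W_2$ is precisely what keeps the coefficient $1-a$ positive and makes the absorption legitimate.
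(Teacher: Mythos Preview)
Your argument is correct and shares the same skeleton as the paper's proof: take an approximating sequence $g_j\in\vcomp$ for $f$ in the graph norm, multiply by a cutoff $\chi$, and use the product formula~(\ref{E:prod-2}) to reduce matters to the gradient term $\nabla_{(d\chi)^{\sharp}}g_j$. The difference lies entirely in how this gradient term is handled.

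The paper only shows that $\nabla_{(d\chi)^{\sharp}}g_j\to 0$ \emph{weakly}: it uses the same global energy inequality $(1-a)\|\nabla v\|^2\le(H_{W}v,v)+C\|v\|^2$ (your localized version with $\psi$ works too, though the cutoff is not needed since $|W_2|\in\mathcal{D}(M)$ globally) to get that $\|\nabla g_j\|$ is merely bounded, combines this with distributional convergence to zero, and then invokes Mazur's Lemma to pass to convex combinations $f_j$ of the $\chi g_j$ that converge strongly. Your route is sharper: applying the \emph{same} energy estimate to the differences $g_j-g_m$ shows that $\{\nabla g_j\}$ is Cauchy in $L^2$, hence $\nabla_{(d\chi)^{\sharp}}g_j\to 0$ strongly, and the original sequence $\chi g_j$ already does the job. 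You thereby eliminate Mazur's Lemma and obtain a cleaner, more elementary proof; the paper's approach, on the other hand, is a standard ``boundedness plus weak limit identification plus Mazur'' template that generalizes readily to situations where only a one-sided bound on the gradient is available.
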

\begin{proof} Let $\rho_1>0$ and let $f\in \Dom(\overline{H_{W}})$ be a compactly supported section such that $\supp f\subset B_{{\rho_1}}(x_0)$.
By the definition of the closure $\overline{H_{W}}$, there exists a sequence $v_k\in \vcomp$ such that $v_k\to f$ and $H_{W}v_k\to \overline{H_{W}}f$ in $L^2(\vbn)$. Let $\chi\in\mcomp$ be a function satisfying the following properties: $0\leq \chi\leq 1$; $\chi(x)\equiv 1$ in a neighborhood of $\supp f$; and $\supp \chi\subset B_{\rho}(x_0)$, where $\rho>\rho_1$. Define $u_k:=\chi v_k$ and observe that $u_k\in\vcomp$ with $\supp u_k\subset B_{\rho}(x_0)$. Furthermore, note that $u_k\to \chi f=f$ in $L^2(\vbn)$. Similarly as in~(\ref{E:temp-ch-1}), we have
\begin{align}\label{E:temp-ch-2}
&H_{W}u_k=H_{W}(\chi v_k)=\chi H_{W}v_k-2\nabla_{(d\chi)^{\sharp}}v_k +v_k\Delta \chi,
\end{align}
from which we see that the first and the third term on the right hand side converge (in the norm of $L^2(\vbn)$) to $\chi \overline{H_{W}}f=\overline{H_{W}}f$ and $f\Delta \chi=0$, respectively, where we have used the definition of $v_k$ and the fact that $\chi(x)\equiv 1$ in a neighborhood of $\supp f$. Next, we show that there is a subsequence of $v_k$, again denoted by $v_k$, such that $\nabla_{(d\chi)^{\sharp}}v_k\to 0$ weakly. For the latter property, it suffices to show that the sequence $\nabla_{(d\chi)^{\sharp}}v_k$ is (norm) bounded in $L^2(\vbn)$, after taking into account that $\nabla_{(d\chi)^{\sharp}}v_k\to 0$ in distributional sense:
\begin{align}\nonumber
&(\nabla_{(d\chi)^{\sharp}}v_k,s)=(\nabla v_k,d\chi\otimes s)=(v_k, (\Delta \chi)s)-(v_k,\nabla_{(d\chi)^{\sharp}}s)\nonumber\\
&\to (f,(\Delta \chi)s)-(f,\nabla_{(d\chi)^{\sharp}}s)=0,\nonumber
\end{align}
as $k\to 0$, for all $s\in\vcomp$, where in the second equality we used integration by parts and in the last equality we used the fact that $\chi(x)\equiv 1$ in a neighborhood of $\supp f$. Finally, thanks to
\[
\|\nabla_{(d\chi)^{\sharp}}v_k\|\leq c_1\|\nabla v_k\|,
\]
where $c_1$ is a constant, it is enough to show that $\nabla v_k$ is a (norm) bounded sequence in $L^2(T^*M\otimes\vbn)$.
Using a decomposition $W=W_1-W_2$, $0\leq W_{j}\in\lloc^2(\End \vbn)$ with $|W_2|\in\mathcal{D}(M)$, and proceeding as in~(\ref{E:temp-ch-3}), we see that there exist numbers $0\leq a<1$ and $C\geq 0$ such that
\begin{align}\nonumber
&\|\nabla v_k\|^2=(\nabla^{\dagger}\nabla v_k,v_k)=(H_{W}v_k,v_k)-(W_1v_k,v_k)+(W_2v_k,v_k)\nonumber\\
&\leq (H_{W}v_k,v_k)+a\|\nabla v_k\|^2+C\|v_k\|^2,\nonumber
\end{align}
which leads to
\[
\|\nabla v_k\|^2\leq (1-a)^{-1}[(H_{W}v_k,v_k)+C\|v_k\|^2].
\]
The latter inequality tells us that $\|\nabla v_k\|$ is bounded (because the sequences $H_{W}v_k$ and $v_k$ converge in $L^2(\vbn)$). Thus, we have shown that there is a subsequence of $u_k\in\vcomp$, which we again denote by $u_k$, such that $H_{W}u_k\to \overline{H_{W}}f$ weakly in $L^2(\vbn)$. Now we can start with $u_k$ and use Mazur's Lemma (see Lemma~\ref{L:Maz-L} below) to construct a sequence $f_j\in\vcomp$ with $\supp f_j\subset B_{\rho}(x_0)$ satisfying the properties (i) and (ii) of this lemma.
\end{proof}

From hereon we assume that the hypotheses of Theorem~\ref{T:m-2} are satisfied. To make our notations simpler, we drop $x_0$ from the symbol $B_{\rho}(x_0)$. For $\rho>0$ define $V_{\rho}(x):=V(x)$ if $x\in B_{\rho}$ and $V_{\rho}(x):=0$ if $x\notin B_{\rho}$. Note that $V_{\rho}$ is $\mathcal{D}$-decomposable. Denoting
\begin{equation}\label{E:2-2a-or}
H_{V_{\rho}}:= \nabla^{\dagger}\nabla+V_{\rho}
\end{equation}
and looking at the comments before Lemma~\ref{L:2-2}, we see that $H_{V_{\rho}}|_{\vcomp}$ is a lower semibounded and essentially self-adjoint operator. Thus, Lemma~\ref{L:2-2} and Lemma~\ref{L:2-2-5} are applicable to $H_{V_{\rho}}$.

Let $H_{V}$ and $H_{V_{\rho}}$ be as in~(\ref{E:def-H}) and~(\ref{E:2-2a-or}) respectively. As $H_{V_{\rho}}|_{\vcomp}$ is lower semibounded and $\vcompr\subset\vcomp$, it follows that $H_{V}|_{\vcompr}=H_{V_{\rho}}|_{\vcompr}$ is lower semibounded. Let $F_{\rho}$, $\rho>0$, denote the Friedrichs extension of $H_{V}|_{\vcompr}$ in the space $L^2(\vbn|_{B_{\rho}})$.
%For a section $f\in L^2(\vbn)$ and $\rho>0$, let $(P_{\rho}f)(x):=f(x)$ if $x\in B_{\rho}$ and $(P_{\rho}f)(x):=0$ if $x\notin B_{\rho}$.
%Here we also note (for later use) that by abstract Theorem 2.28 in~\cite{GT-book} the (symmeric, densely defined) operator $H_{V}|_{\vcomp}$ has a self-adjoint extension $A$ in $L^2(\vbn)$. With these notations, we are ready to state the next lemma.

\begin{lemma} \label{L:2-3} Let $H_{V}$ be as in~(\ref{E:def-H}). Assume that the hypotheses of Theorem~\ref{T:m-2} are satisfied. Assume that $H_{V}|_{\vcomp}$ has a self-adjoint extension $A$ in $L^2(\vbn)$.  Let $T_1>T_0>0$ and $R>0$ be arbitrary real numbers and let $\rho:=R+T_1$. Then, for all $t\in [0,T_0]$ and for all $f\in\vcomp$ such that $\supp f\subset B_{R}$ we have
\begin{equation}\label{E:formula-or}
[\cos(A^{1/2}t)f](x)=[\cos(({F_{\rho}})^{1/2}t)f](x),
\end{equation}
where $F_{\rho}$ is the Friedrichs extension of $H_{V}|_{\vcompr}$ in $L^2(\vbn|_{B_{\rho}})$.
\end{lemma}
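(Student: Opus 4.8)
The plan is to manufacture a single \emph{reference solution} that I can identify, via the abstract uniqueness statement of Lemma~\ref{L:2-1}, with both sides of~(\ref{E:formula-or}) at once. Since $V_{\rho}$ is $\mathcal{D}$-decomposable, the operator $\overline{H_{V_{\rho}}}$ is self-adjoint and lower semibounded on $L^2(\vbn)$, so for $f\in\vcomp$ with $\supp f\subset B_{R}$ I would set
\[
v(t):=\cos\bigl((\overline{H_{V_{\rho}}})^{1/2}t\bigr)f,\qquad t\in[0,T_0].
\]
Because $f\in\Dom(\overline{H_{V_{\rho}}})$, the spectral calculus makes $v(t)$ twice $L^2$-differentiable with $v''(t)=-\overline{H_{V_{\rho}}}v(t)$, $v(0)=f$, and $v'(0)=0$. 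The decisive geometric input is finite propagation speed: Lemma~\ref{L:2-2} gives $\supp v(t)\subset B_{R+t}$, and since $t\le T_0<T_1$ with $\rho=R+T_1$, the support remains in $B_{R+T_0}\Subset B_{\rho}$, strictly away from $\partial B_{\rho}$.

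The heart of the argument is a \emph{domain-transfer} step showing $v(t)\in\Dom(A)\cap\Dom(F_{\rho})$ with $Av(t)=F_{\rho}v(t)=\overline{H_{V_{\rho}}}v(t)$ for every $t\in[0,T_0]$. Fixing $t$, I would apply the approximation Lemma~\ref{L:2-2-5} to $\overline{H_{V_{\rho}}}$ and the compactly supported section $v(t)$ (inner radius $R+T_0$, outer radius any $\rho'\in(R+T_0,\rho)$): this produces $f_j\in\vcomp$ with $\supp f_j\subset B_{\rho'}\subset B_{\rho}$ such that $f_j\to v(t)$ and $H_{V_{\rho}}f_j\to\overline{H_{V_{\rho}}}v(t)$ in $L^2(\vbn)$. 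Since $V_{\rho}=V$ on $B_{\rho}\supset\supp f_j$, one has $H_{V_{\rho}}f_j=H_Vf_j$ on all of $M$. Now $A\supset H_V|_{\vcomp}$ and $f_j\in\vcomp$ give $Af_j=H_Vf_j\to\overline{H_{V_{\rho}}}v(t)$, so the closedness of $A$ forces $v(t)\in\Dom(A)$ and $Av(t)=\overline{H_{V_{\rho}}}v(t)$. The very same sequence serves $F_{\rho}$: because $\supp f_j\subset B_{\rho}$ we have $f_j\in\vcompr$, and since $F_{\rho}$ is the Friedrichs extension of $H_V|_{\vcompr}$ we get $F_{\rho}f_j=H_Vf_j\to\overline{H_{V_{\rho}}}v(t)$, whence $v(t)\in\Dom(F_{\rho})$ and $F_{\rho}v(t)=\overline{H_{V_{\rho}}}v(t)$ (viewing $v(t)$ in $L^2(\vbn|_{B_{\rho}})$ via zero-extension).

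With this identity the two halves of~(\ref{E:formula-or}) drop out of Lemma~\ref{L:2-1}. Indeed $v''(t)=-\overline{H_{V_{\rho}}}v(t)=-Av(t)$, so $v$ is a $\Dom(A)$-valued solution of $v''+Av=0$ with $v(0)=f$, $v'(0)=0$; since $f\in\Dom(A)$, Lemma~\ref{L:2-1} yields $v(t)=\cos(A^{1/2}t)f$. Identically, $v''(t)=-F_{\rho}v(t)$ with the same initial data, together with $f\in\Dom(F_{\rho})$ and Lemma~\ref{L:2-1} applied in $L^2(\vbn|_{B_{\rho}})$, gives $v(t)=\cos((F_{\rho})^{1/2}t)f$. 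Comparing the two identifications proves~(\ref{E:formula-or}) for all $t\in[0,T_0]$.

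I expect the domain-transfer step to be the main obstacle, and it is precisely where the hypotheses are consumed: the strict inequality $T_0<T_1$ is what creates the buffer between $\supp v(t)\subset B_{R+T_0}$ and $\partial B_{\rho}$, and this buffer is exactly what lets Lemma~\ref{L:2-2-5} deliver approximants $f_j$ that are \emph{still supported inside} $B_{\rho}$ (hence lie in $\vcompr$), simultaneously coupling the abstract extension $A$ and the Friedrichs extension $F_{\rho}$ to the common symmetric operator $H_V$ on smooth compactly supported sections. The remaining points are routine: the mild regularity demanded by Lemma~\ref{L:2-1} (twice $L^2$-differentiability of $v$ and membership $v(t)\in\Dom(A)$ for each $t$) follows immediately from the spectral calculus for $\overline{H_{V_{\rho}}}$, and the equality $H_{V_{\rho}}f_j=H_Vf_j$ is forced by $\supp f_j\subset B_{\rho}$.
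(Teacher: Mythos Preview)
Your proposal is correct and follows essentially the same route as the paper: build the reference solution $v(t)=\cos((\overline{H_{V_{\rho}}})^{1/2}t)f$, use Lemma~\ref{L:2-2} for finite propagation speed, invoke Lemma~\ref{L:2-2-5} to produce approximants supported in $B_{\rho}$, transfer $\overline{H_{V_{\rho}}}v(t)$ to $Av(t)$ and $F_{\rho}v(t)$ via the identity $H_{V_{\rho}}f_j=H_Vf_j$, and conclude with two applications of Lemma~\ref{L:2-1}. The only cosmetic difference is that the paper routes the $A$-step through $\overline{H_V}$ first, whereas you invoke the closedness of $A$ directly; both are equivalent.
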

\begin{proof} To simplify the notations, for the remainder of this proof, we will drop $|_{\vcomp}$ from $\overline{H_{V_{\rho}}|_{\vcomp}}$ and $\overline{H_{V}|_{\vcomp}}$ and write just $\overline{H_{V_{\rho}}}$ and $\overline{H_{V}}$. Let $T_1>T_0>0$, $R>0$, $\rho:=R+T_1$, and $f\in\vcomp$ with $\supp f\subset B_{R}$ be as in the hypothesis of the lemma. Define $U(t):=\cos\left((\overline{H_{V_{\rho}}})^{1/2}t\right)f$ with $t\in[0,T_0]$, and note that $U(t)$ satisfies the equation
\begin{equation}\label{E:2-3-or}
U''+\overline{H_{V_{\rho}}}U=0,\quad U(0)=f,\,\, U'(0)=0.
\end{equation}
Denoting $u(t,\cdot):=U(t)(\cdot)$ and using Lemma~\ref{L:2-2} with $W=V_{\rho}$ and $t\in[0,T_0]$, we have  $\supp u(t,\cdot)\subset B_{R+t}\subset B_{\rho_0}$, where $\rho_0:=R+T_0<R+T_1=\rho$.

As $U(t)\in \Dom(\overline{H_{V_{\rho}}})$ with $\supp u(t,\cdot)\subset B_{\rho_0}$, $\rho_0<\rho$, Lemma~\ref{L:2-2-5} tells us that
there exists a sequence $f_{j}(t,\cdot)\in\vcomp$ with with $\supp f_j(t,\cdot)\subset B_{\rho}$ such that
\begin{equation}\label{E:help-conv-1}
f_j(t,\cdot)\to U(t),\qquad H_{V_{\rho}}f_{j}(t,\cdot)\to \overline{H_{V_{\rho}}}U(t),
\end{equation}
for all $t\in [0,T_0]$, where both convergence relations are understood in the norm of $L^2(\vbn|_{B_{\rho}})$.

As $V_{\rho}f_{j}(t,\cdot)=Vf_{j}(t,\cdot)$, we can rewrite~(\ref{E:help-conv-1}) as
\begin{equation}\label{E:help-conv-2}
f_j(t,\cdot)\to U(t), \qquad H_{V}f_{j}(t,\cdot)\to \overline{H_{V_{\rho}}}U(t),
\end{equation}
for all $t\in [0,T_0]$.

Therefore, $U(t)\in \Dom(\overline{H_{V}})$ and $\overline{H_{V_{\rho}}}U=\overline{H_{V}}U$, for all $t\in [0,T_0]$. Recalling (see the hypothesis) that the operator $A$ is a self-adjoint extension of $H_{V}|_{\vcomp}$, we infer that $U(t)\in\Dom(A)$ and $\overline{H_{V}}U=AU$, for all $t\in [0,T_0]$. Thus, from~(\ref{E:2-3-or}) we get
\begin{equation}\label{E:2-4-or}
U''+AU=0,\quad U(0)=f,\,\, U'(0)=0,
\end{equation}
and from here, using Lemma~\ref{L:2-1}, we obtain
\begin{equation}\label{E:or-rep-76-1}
U(t)=\cos(A^{1/2}t)f,
\end{equation}
for all $t\in [0,T_0]$.

The argument used in~(\ref{E:help-conv-1}) and~(\ref{E:help-conv-2}) shows that $U(t)\in\Dom(F_{\rho})$ and
\begin{equation}\label{E:Friedrichs-rho-a}
\overline{H_{V_{\rho}}}U=F_{\rho}U,
\end{equation}
for all $t\in [0,T_0]$, where $F_{\rho}$ is the Friedrichs extension of $H_{V}|_{\vcompr}$ in $L^2(\vbn|_{B_{\rho}})$.

Going back to~(\ref{E:2-3-or}) and referring to~(\ref{E:Friedrichs-rho-a}) we get
\begin{equation}\nonumber
U''+F_{\rho} U=0,\quad U(0)=f,\,\, U'(0)=0.
\end{equation}
Finally, using Lemma~\ref{L:2-1} we obtain $U(t)=\cos(({F_{\rho}})^{1/2}t)f$ for all $t\in [0,T_0]$. Combining the latter formula with~(\ref{E:or-rep-76-1}) leads to~(\ref{E:formula-or}).
\end{proof}

\begin{remark}\label{rem-friedrichs} Following the notations of Lemma~\ref{L:2-3}, let $T_1>T_0>0$, $R>0$, $\rho_0:=R+T_0$, and $\rho:=R+T_1$. Furthermore, let $F_{\rho}$, $\rho>0$, be the Friedrichs extension of $H_{V}|_{\vcompr}$ in the space $L^2(\vbn|_{B_{\rho}})$. Taking into account the proof of Lemma~\ref{L:2-3}, we record the following observation: if $f\in\vcomp$  is a section such that $\supp f\subset B_{R}$ and if $U(t):=\cos(({F_{\rho}})^{1/2}t)f$, then $\supp [U(t)(\cdot)]\subset B_{\rho_0}\subset B_{\rho}$ for all  $t\in[0,T_0]$.
\end{remark}

\medskip

Before proceeding further, we explain some notations used below. If $S$ is a self-adjoint operator in a Hilbert space $\mathscr{H}$ with inner product $(\cdot,\cdot)$ and norm $\|\cdot\|$, then (see, for instance, Theorem B.5 in~\cite{Guneysu-2016}) $S$ has a unique spectral resolution $E(\lambda)$ such that $S=I_{\mathbb{R}}(E)$, where  $I_{\mathbb{R}}\colon \mathbb{R}\to \mathbb{R}$ is the identity function and $I_{\mathbb{R}}(E)$ is understood in the sense of spectral calculus. If $v\in\mathscr{H}$, then the function $\lambda\mapsto (E(\lambda)v,v)=\|E(\lambda)v\|^2$ is right-continuous and increasing, and, hence, gives rise to a Borel measure on $\mathbb{R}$, which we denote by $(E(d\lambda)v,v)$. For details on the spectral calculus, see section B.1.4 in~\cite{Guneysu-2016}.  %recall a few abstract definitions from Section A.1.4 of~\cite{Guneysu-2016}.  A spectral resolution in a Hilbert space $\mathscr{H}$ with inner product $(\cdot,\cdot)$ and norm $\|\cdot\|$ is a map $E\colon\mathbb{R}\to\mathscr{L}(\mathscr{H})$, where  $\mathscr{L}(\mathscr{H})$ indicates bounded linear operators, such that the following properties are satisfied: each $E(\lambda)$ is an orthogonal projection onto its image; $E$ is monotone in the sense that if $\lambda_1\leq\lambda_2$ then $\Ran (E(\lambda_1))\subset\Ran (E(\lambda_2))$; $E$ is right continuous in the strong topology of $\mathscr{L}(\mathscr{H})$; $\displaystyle\lim{\lambda\to-\infty}E(\lambda)=0$ and $\displaystyle\lim{\lambda\to-\infty}E(\lambda)=I_{\mathscr{H}}$, where $I_{\mathscr{H}}$ is the identity operator on $\mathscr{H}$. . Given a spectral resolution $E$ and a Borel function $\phi\colon\mathbb{R}\to \mathbb{C}$, we define
%\[
%\textrm{D}_{E,\phi}:=\left\{v\in \mathscr{H}\colon \int_{-\infty}^{\infty}|\phi(\lambda)|^2\,(E(d\lambda),v,v)<\infty\right\}
%\]
%and for all $v\in \textrm{D}_{E,\phi}$,
%\[
%(\phi(E)v,v):=\int_{-\infty}^{\infty}\phi(\lambda)\,(E(d\lambda),v,v).
%\]
%If $S$ is a self-adjoint operator in $\mathscr{H}$, then $S$ has a unique spectral resolution $E$ such that $S=I_{\mathbb{R}}(E)$, where $I_{\mathbb{R}}\colon \mathbb{R}\to \mathbb{R}$ is the identity function; see Theorem A.5 in~\cite{Guneysu-2016}. Lastly, if $\phi\colon\mathbb{R}\to \mathbb{C}$ is a Borel function,

\medskip

\noindent\textbf{Continuation of the proof of Theorem~\ref{T:m-2}} Let $F_{\rho}$ be the Friedrichs extension of $H_{V}|_{\vcompr}$ in $L^2(\vbn|_{B_{\rho}})$. Looking at the hypothesis~(\ref{E:hyp-minorant-2}) and using $q(x):=[(\alpha\circ r)(x)]^2$, with $r(x)$ as in~(\ref{E:dist-x-0}), we see that
\begin{equation}
(H_{V} u,u)\geq -(\alpha(\rho))^2(u,u),
\end{equation}
for all $u\in \vcomp$ with $\supp u\subset B_{\rho}$, and, therefore,  $-(\alpha(\rho))^2$ is a lower bound of $F_{\rho}$.

In the subsequent discussion, $\cosh(\cdot)$ stands for the hyperbolic cosine. Denoting by $E_{F}(\lambda)$ the spectral resolution of $F_{\rho}$, we have,
\begin{align}\label{E:oroch-est-1}
&|(\cos(({F_{\rho}})^{1/2}t)v,v)|= \left|\int_{-\infty}^{\infty}\cos(\lambda^{1/2}t)\, (E_{F}(d\lambda)v,v)\right|\nonumber\\
&\leq \int_{-\infty}^{\infty}|\cos(\lambda^{1/2}t)|\, (E_{F}(d\lambda)v,v)=\int_{-(\alpha(\rho))^2}^{\infty}|\cos(\lambda^{1/2}t)|\, (E_{F}(d\lambda)v,v)\nonumber\\
&\leq \cosh (t\alpha(\rho))\|v\|_{L^2(\vbn|_{B_{\rho}})}^2,
\end{align}
for all $t\geq 0$ and all $v\in L^2(\vbn|_{B_{\rho}})$, where in the first equality we used the spectral calculus,  in the second equality we used the fact that $-(\alpha(\rho))^2$ is a lower bound of $F_{\rho}$, and in the second inequality we used the property
\[
|\cos(\lambda^{1/2}t)|\leq \cosh(\kappa^{1/2}t),
\]
which holds for all $t\geq 0$ and $\lambda\geq-\kappa$, with $\kappa>0$.
From~(\ref{E:oroch-est-1}) we obtain
\begin{equation}\label{E:oroch-est-2}
\|\cos(({F_{\rho}})^{1/2}t)\|_{2,2,\vbn|_{B_{\rho}}}\leq \cosh (t\alpha(\rho)),
\end{equation}
where $\|\cdot\|_{2,2,\vbn|_{B_{\rho}}}$ stands for the norm of a (bounded) operator $L^2(\vbn|_{B_{\rho}})\to L^2(\vbn|_{B_{\rho}})$.

Let $T_1>T_0>0$ and $R>0$ be arbitrary real numbers and let $\rho:=R+T_1$. Assume that there exists a self-adjoint extension $A$ of $H_{V}|_{\vcomp}$  in $L^2(\vbn)$, and let $E(\lambda)$ denote the spectral resolution of $A$. Keeping in mind the remark~\ref{rem-friedrichs}, for all sections $f\in\vcomp$  with $\supp f\subset B_{R}$ and all  $t\in[0,T_0]$,  we have
\begin{align}\label{E:oroch-est-3}
&\int_{-\infty}^{\infty}\cos^2(\lambda^{1/2}t)\,(E(d\lambda)f,f)=\|\cos(A^{1/2}t)f\|^2
=\|\cos(({F_{\rho}})^{1/2}t)f\|_{L^2(\vbn|_{B_{\rho}})}^2\nonumber\\
&\leq \|\cos(({F_{\rho}})^{1/2}t)\|_{2,2,\vbn|_{B_{\rho}}}^2\|f\|_{L^2(\vbn|_{B_{\rho}})}^2\leq \|f\|^2\cosh^2(t\alpha(\rho))\nonumber\\
&\leq \|f\|^2e^{2t\alpha(\rho)}=\|f\|^2e^{2t\alpha(T_1+R)},
\end{align}
where in the first equality we used the spectral calculus, in the second equality we used the representation~(\ref{E:formula-or}) and the remark~\ref{rem-friedrichs}, in the second inequality we used~(\ref{E:oroch-est-2}), and in the third estimate we used the definition of $\cosh(\cdot)$. Before proceeding further, we note that for all $\lambda<0$ and $t\geq 0$, we have
\begin{equation}\label{E:oroch-est-4}
\cos^2(\lambda^{1/2}t)=\cos^2(i|\lambda|^{1/2}t)=\cosh^2(|\lambda|^{1/2}t)\geq \frac{e^{2|\lambda|^{1/2}t}}{4},
\end{equation}
where $i$ is the imaginary unit.  Therefore, for all $t\in [0,T_0]$ we have
\begin{align}\label{E:oroch-est-5}
&Y(t):=\int_{-\infty}^{0}e^{2|\lambda|^{1/2}t}\,(E(d\lambda)f,f)\leq 4\int_{-\infty}^{0}\cos^2(\lambda^{1/2}t)(E(d\lambda)f,f)\nonumber\\
&\leq 4\int_{-\infty}^{\infty}\cos^2(\lambda^{1/2}t)\,(E(d\lambda)f,f)\leq 4\|f\|^2e^{2t\alpha(T_1+R)},
\end{align}
where in the first inequality we used~(\ref{E:oroch-est-4}) and in the third inequality we used~(\ref{E:oroch-est-3}). Using Cauchy--Schwarz inequality and~(\ref{E:oroch-est-5}) we obtain
\begin{align}\label{E:oroch-est-6}
&\int_{-\infty}^{0}e^{|\lambda|^{1/2}t}\,(E(d\lambda)f,f)\nonumber\\
&=\int_{-\infty}^{0}e^{|\lambda|^{1/2}t}\,[(E(d\lambda)f,f)]^{1/2}[(E(d\lambda)f,f)]^{1/2}\nonumber\\
&\leq (Y(t))^{1/2}\left(\int_{-\infty}^{0}(E(d\lambda)f,f)\right)^{1/2}\leq (Y(t))^{1/2}\|f\|\leq 2\|f\|^2e^{t\alpha(T_1+R)},
\end{align}
for all $t\in [0,T_0]$ and all $f\in\vcomp$  with $\supp f\subset B_{R}$. In particular, putting $t=T_0$ in~(\ref{E:oroch-est-6}) we get
\[
\int_{-\infty}^{0}e^{|\lambda|^{1/2}T_0}\,(E(d\lambda)f,f)\leq 2\|f\|^2e^{T_0\alpha(T_1+R)},
\]
which upon letting $T_1\to T_{0}+$ leads to
\[
\int_{-\infty}^{0}e^{|\lambda|^{1/2}T_0}\,(E(d\lambda)f,f)\leq 2\|f\|^2e^{T_0\alpha(T_0+R)},
\]
for all $f\in\vcomp$  with $\supp f\subset B_{R}$.

By the assumption on $\alpha(\cdot)$ and the properties of $\mathscr{V}$ mentioned before Theorem~\ref{T:m-2} it follows that $p(T_0):=T_0\alpha(T_0+R)$ belongs to $\mathscr{V}$. As $T_0>0$ and $R>0$ are arbitrary, we see that the measure $d\sigma_{f}(\lambda):= (E(d\lambda)f,f)$, where $f\in\vcomp$, satisfies the hypothesis~(\ref{E:vul-rep-2}) of Lemma~\ref{L:Vul-lemma}.

If $A_1$ and $A_2$ are two self-adjoint extensions of $H_{V}|_{\vcomp}$ in $L^2(\vbn)$ with the corresponding spectral resolutions
$E_1(\lambda)$ and $E_2(\lambda)$, then by Lemma~\ref{L:2-3} we have $\cos(A_{1}^{1/2}t)f=\cos(A_{2}^{1/2}t)f$, for all $t>0$ and all $f\in\vcomp$. Hence, by spectral calculus we have
\[
\int_{-\infty}^{\infty}\cos(\lambda^{1/2}t)\,(E_{1}(d\lambda)f,f)=\int_{-\infty}^{\infty}\cos(\lambda^{1/2}t)\,(E_{2}(d\lambda)f,f).
\]
Denoting the left-hand side (or the right-hand side) of the last equation by $Z(t)$ and keeping in mind that $d\sigma_{j,f}(\lambda):=(E_{j}(d\lambda)f,f)$, where $j=1,2$ and $f\in\vcomp$, satisfy the property~(\ref{E:vul-rep-2}), we can use Lemma~\ref{L:Vul-lemma} (and the fact that $E_{j}(\lambda)$ is a spectral resolution) to infer that $(E_{1}(\lambda)f,f)=(E_{2}(\lambda)f,f)$ for all $f\in\vcomp$ and all $\lambda\in\mathbb{R}$. As $\vcomp$ is dense in $L^2(\vbn)$, from the last equality we get $A_1=A_2$. Therefore, if the (densely defined, symmetric) operator $H_{V}|_{\vcomp}$ has a self-adjoint extension in $L^2(\vbn)$, then this self-adjoint extension is unique (that is, $H_{V}|_{\vcomp}$ is essentially self-adjoint). $\hfill\square$

\appendix
\section{Auxiliary lemmas}\label{S:aux-lemmas}
Before stating the following lemma (see Theorem 2.28 in~\cite{GT-book} for the proof), we explain some terminology. Let $\mathscr{H}$ be a Hilbert space with the inner product $(\cdot,\cdot)$. A conjugate linear map $\tau \colon\mathscr{H}\to \mathscr{H}$ is called a \emph{conjugation} if it satisfies $\tau^2 = I_{\mathscr{H}}$ and $(\tau u,\tau v) = (v,u)$, for all $u$, $v\in\mathscr{H}$.  An operator $S$ in $\mathscr{H}$ is called $\tau$-real if there exists a conjugation $\tau \colon\mathscr{H}\to \mathscr{H}$ such that $\tau (\Dom(S))\subset \Dom(S)$ and $S(\tau u) = \tau (Su)$, for all $u\in\Dom(S)$. An example of $\tau$ is provided by the usual complex conjugation in $\mathscr{H}=L^2(M)$, where $M$ is a Riemannian manifold and $L^2(M)$ is the space of square integrable complex-valued functions on $M$.

\begin{lemma}\label{L:GT-lemma} Assume that $S$ is a symmetric and $\tau$-real operator in a Hilbert space $\mathscr{H}$. Then, $S$ has a self-adjoint extension in $\mathscr{H}$.
\end{lemma}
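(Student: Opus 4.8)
The plan is to invoke von Neumann's theory of self-adjoint extensions, which reduces the existence of a self-adjoint extension of a (closable) symmetric operator to the equality of its deficiency indices $n_{\pm}:=\dim\ker(S^{*}\mp iI)$. Thus it suffices to produce a dimension-preserving bijection between the deficiency subspaces $N_{+}:=\ker(S^{*}-iI)$ and $N_{-}:=\ker(S^{*}+iI)$, and the natural candidate is the conjugation $\tau$ itself.

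First I would show that $\tau$ maps $\Dom(S^{*})$ into itself and intertwines with the adjoint, i.e. $S^{*}(\tau u)=\tau(S^{*}u)$ for all $u\in\Dom(S^{*})$. To see this, fix $u\in\Dom(S^{*})$ and $v\in\Dom(S)$. Using $\tau^{2}=I_{\mathscr{H}}$, the defining identity $(\tau a,\tau b)=(b,a)$, and the $\tau$-reality hypothesis $S(\tau v)=\tau(Sv)$ (which in particular gives $\tau v\in\Dom(S)$ and $\tau(Sv)=S(\tau v)$), I would rewrite $Sv=\tau(S(\tau v))$ and peel off one factor of $\tau$ from each slot of $(Sv,\tau u)$, transferring $S$ across $\tau$. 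Combined with the defining property of $S^{*}$ applied to the pair $(\tau v,u)$, this chain of identities yields $(Sv,\tau u)=(v,\tau S^{*}u)$ for every $v\in\Dom(S)$. By the definition of the adjoint this is exactly the statement that $\tau u\in\Dom(S^{*})$ with $S^{*}(\tau u)=\tau(S^{*}u)$.

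With the intertwining relation in hand, the conclusion is immediate. If $u\in N_{+}$, then $S^{*}(\tau u)=\tau(S^{*}u)=\tau(iu)=-i\,\tau u$, because $\tau$ is conjugate linear; hence $\tau u\in N_{-}$, and the same computation with $-i$ in place of $i$ shows $\tau(N_{-})\subset N_{+}$. Since $\tau^{2}=I_{\mathscr{H}}$, the restriction $\tau\colon N_{+}\to N_{-}$ is a bijection, and because $(\tau u,\tau u)=(u,u)$ it is norm preserving. An anti-linear isometric bijection carries an orthonormal basis to an orthonormal basis, so $\dim N_{+}=\dim N_{-}$, i.e. $n_{+}=n_{-}$. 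Von Neumann's theorem then guarantees that $S$ (equivalently its closure $\overline{S}$, which has the same deficiency indices since $S^{*}=(\overline{S})^{*}$) admits a self-adjoint extension in $\mathscr{H}$.

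The only genuinely delicate step is the intertwining identity $S^{*}\tau=\tau S^{*}$ of the second paragraph: one must manipulate the sesquilinear form carefully, keeping track of which argument is linear and of the conjugate linearity of $\tau$, since a misplaced complex conjugate would flip the sign of the eigenvalue and spoil the bookkeeping $\tau(N_{+})\subset N_{-}$ in the final step. Everything after that identity is formal.
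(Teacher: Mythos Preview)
Your argument is correct and is the standard von Neumann proof of this classical fact: the conjugation $\tau$ intertwines with $S^{*}$ and therefore interchanges the deficiency subspaces, forcing $n_{+}=n_{-}$. The paper does not supply its own proof of this lemma; it merely cites Theorem~2.28 in Teschl's textbook, where precisely this argument appears, so your write-up is in complete agreement with the intended reasoning.
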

Next,  we recall some known results used in section~\ref{S:m-2}. We start with Mazur's Lemma, whose statement can be found in Lemma 10.19 of~\cite{RR-book}. In this lemma, the symbol $\mathbb{Z}_{+}$ stands for the set $\{1,2,3,\dots\}$.
\begin{lemma}\label{L:Maz-L} Let $\mathscr{B}$ be a Banach space and suppose that $u_k\to f$ weakly in $\mathscr{B}$.  Then, there exists a function $G\colon \mathbb{Z}_{+}\to\mathbb{Z}_{+}$ and a sequence of sets of real numbers $\{(\beta(j))_{k}\}_{k=j}^{G(j)}$, with $(\beta(j))_{k}\geq 0$ and $\displaystyle \sum_{k=j}^{G(j)} (\beta (j))_{k} =1$, such that the sequence
\[
f_{j}:=\displaystyle \sum_{k=j}^{G(j)} (\beta (j))_{k}u_{k}
\]
converges strongly to $f$ in $\mathscr{B}$.
\end{lemma}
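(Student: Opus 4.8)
The plan is to deduce the statement from the classical fact that the norm closure and the weak closure of a convex subset of a Banach space coincide. For each $j\in\mathbb{Z}_{+}$, I would introduce the convex hull $C_j:=\operatorname{conv}\{u_k\colon k\geq j\}$ of the $j$-th tail of the sequence. Since $u_k\to f$ weakly, every basic weak neighborhood of $f$ meets $\{u_k\colon k\geq j\}\subset C_j$, so $f$ belongs to the weak closure $\overline{C_j}^{\,w}$.

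The key step is to show $\overline{C_j}^{\,w}=\overline{C_j}^{\,s}$, where the superscripts $w$ and $s$ denote the weak and the norm (strong) closures. The inclusion $\overline{C_j}^{\,s}\subseteq\overline{C_j}^{\,w}$ is automatic, since the norm topology is finer than the weak topology. For the reverse inclusion I would argue by contraposition: if $x\notin\overline{C_j}^{\,s}$, then $x$ and the closed convex set $\overline{C_j}^{\,s}$ can be strictly separated by the Hahn--Banach separation theorem, producing a continuous linear functional $\phi$ and a constant $\alpha\in\mathbb{R}$ with $\RE\phi(x)>\alpha\geq\RE\phi(y)$ for all $y\in C_j$. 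The set $\{z\colon\RE\phi(z)>\alpha\}$ is then a weakly open neighborhood of $x$ disjoint from $C_j$, whence $x\notin\overline{C_j}^{\,w}$.

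With the equality $\overline{C_j}^{\,w}=\overline{C_j}^{\,s}$ established, the conclusion is a matter of bookkeeping. Because $f\in\overline{C_j}^{\,w}=\overline{C_j}^{\,s}$, for each $j$ I may select $f_j\in C_j$ with $\|f_j-f\|<1/j$. As a point of the convex hull of $\{u_k\colon k\geq j\}$, such an $f_j$ is a finite convex combination, so there exist an index $G(j)\geq j$ and coefficients $(\beta(j))_k\geq 0$ with $\sum_{k=j}^{G(j)}(\beta(j))_k=1$ for which $f_j=\sum_{k=j}^{G(j)}(\beta(j))_k u_k$ (padding with zero coefficients where necessary). Letting $j\to\infty$ gives $f_j\to f$ in norm, which is the assertion.

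The main obstacle is the separation step. One must handle the complex scalar case carefully---passing between the complex functional $\phi$ and the real-linear functional $\RE\phi$ when invoking Hahn--Banach---and verify that the resulting half-space is genuinely weakly open (equivalently, that $\RE\phi$ is weakly continuous), so that it can be used to exclude $x$ from the weak closure. Once this geometric input is in place, the extraction of the finite convex combinations $f_j$ is entirely routine.
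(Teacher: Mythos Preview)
Your argument is correct and is the standard proof of Mazur's Lemma via the Hahn--Banach separation theorem: for each tail convex hull $C_j$ the equality of weak and norm closures places $f$ in $\overline{C_j}^{\,s}$, and the choice $f_j\in C_j$ with $\|f_j-f\|<1/j$ delivers the required convex combinations. Your remarks about the complex case and about padding with zero coefficients to get the index range $j\le k\le G(j)$ are well taken.

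There is nothing to compare here, because the paper does not give its own proof of this lemma. In the appendix it is simply \emph{recalled} as a known auxiliary result, with the statement attributed to Lemma~10.19 of Renardy--Rogers. So you have supplied a complete proof where the paper only cites a reference; your approach is the classical one and would be perfectly acceptable as a self-contained justification.
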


In the next result, whose proof can be found in~\cite{Vul-59}, the symbol $\mathscr{V}$ refers to the class of functions defined before the statement of Theorem~\ref{T:m-2}.

\begin{lemma} \label{L:Vul-lemma} Let $Z\colon (0,\infty)\to\mathbb{C}$ be a function representable as
\begin{equation}\label{E:vul-rep-1}
Z(t)=\int_{-\infty}^{\infty}\cos({\lambda}^{1/2}t)\,d\sigma(\lambda),
\end{equation}
for some function $\sigma\colon\mathbb{R}\to \mathbb{C}$ of bounded variation satisfying the following property: there exists a number $C$ and a function $\xi\in \mathscr{V}$ such that
\begin{equation}\label{E:vul-rep-2}
\int_{-\infty}^{0}e^{|\lambda|^{1/2}t}\,|d\sigma(\lambda)|\leq Ce^{\xi(t)},
\end{equation}
for all $t>0$. Then, the representation~(\ref{E:vul-rep-1}) is unique in the following sense: if $Z$ is representable in the form~(\ref{E:vul-rep-1}) with $\sigma=\sigma_1$ and $\sigma=\sigma_2$,  and if $\sigma_j$, $j=1,2$, satisfy the property~(\ref{E:vul-rep-2}), then $\sigma_1$ and $\sigma_2$ differ by a constant.
\end{lemma}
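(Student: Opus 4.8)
The plan is to reduce the statement to an injectivity assertion and then to an entire-function argument governed by the class $\mathscr{V}$. By linearity it suffices to treat $Z\equiv 0$: setting $\tau:=\sigma_1-\sigma_2$, a complex function of bounded variation, the hypotheses give $\int_{-\infty}^{\infty}\cos(\lambda^{1/2}t)\,d\tau(\lambda)=0$ for all $t>0$ together with $\int_{-\infty}^{0}e^{|\lambda|^{1/2}t}\,|d\tau(\lambda)|\le 2Ce^{\xi(t)}$, and the goal becomes $d\tau=0$ (equivalently $\sigma_1-\sigma_2$ is constant). I would split the integral at $\lambda=0$, writing $Z_+(t):=\int_{[0,\infty)}\cos(\lambda^{1/2}t)\,d\tau$ and, using $\cos(\lambda^{1/2}t)=\cosh(|\lambda|^{1/2}t)$ on $\lambda<0$, $Z_-(t):=\int_{(-\infty,0)}\cosh(|\lambda|^{1/2}t)\,d\tau$, so that $Z_++Z_-=0$ on $(0,\infty)$.

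Next I would promote $Z_-$ to an entire function and read off its growth. From the elementary inequality $|\cosh(s(a+ib))|\le\cosh(s|a|)$ (valid for real $s,a,b$, since $|\cosh(x+iy)|^2=\sinh^2 x+\cos^2 y$), the integral defining $Z_-$ converges for every $z=a+ib\in\CC$ and defines an even entire function $\Phi$ with $|\Phi(a+ib)|\le 2Ce^{\xi(|a|)}$; on the imaginary axis the kernel becomes $\cosh(|\lambda|^{1/2}ib)=\cos(|\lambda|^{1/2}b)$, so $|\Phi(ib)|\le\|\tau\|$, the total variation. Since $Z_+=-Z_-$ on $(0,\infty)$ and $|Z_+|\le\|\tau\|$ there (as $|\cos|\le1$), $\Phi$ equals $-Z_+$ on the real axis and is bounded there too. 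Thus $\Phi$ is \emph{bounded on both coordinate axes} while satisfying $|\Phi(z)|\le 2Ce^{\xi(|\RE z|)}$ on all of $\CC$.

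The crux is a Phragm\'en--Lindel\"of argument \emph{at the critical order two}. The defining properties of the $\mathscr{V}$-function $\xi$ --- in particular $\lim_{t\to\infty}t\xi'(t)/\xi(t)=\gamma$ finite and the divergence $\int_{t_0}^{\infty}g(t)/t^2\,dt=\infty$ with $g=(\xi')^{-1}$ --- force $\xi$ to grow no faster than order two (with at most mild logarithmic corrections); this places $\Phi$ exactly at the Phragm\'en--Lindel\"of threshold for the opening $\pi/2$ of each quadrant, where the naive principle is inconclusive and the divergence condition functions precisely as a Denjoy--Carleman/Levinson-type quasi-analyticity criterion. Applying the refined Phragm\'en--Lindel\"of/Levinson theorem in each of the four quadrants (boundedness on the two bounding rays plus order-two growth inside), I would conclude that $\Phi$ is bounded on $\CC$, whence Liouville's theorem makes $\Phi$ constant. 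Constancy of $Z_-(t)=\int_{(-\infty,0)}\cosh(|\lambda|^{1/2}t)\,d\tau$ in $t$, via injectivity of the $\cosh$-transform (equivalently of the two-sided Laplace transform of a finite measure), then forces $\tau|_{(-\infty,0)}=0$; the residual identity $\int_{[0,\infty)}\cos(\lambda^{1/2}t)\,d\tau=0$ becomes, after the substitution $\mu=\lambda^{1/2}$, the vanishing of an ordinary Fourier cosine transform, whose injectivity yields $\tau|_{[0,\infty)}=0$ up to the free additive constant.

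I expect the \textbf{main obstacle} to be exactly the order-two Phragm\'en--Lindel\"of/quasi-analyticity step: at order precisely two in a right angle the classical principle does not apply, so the full strength of the class $\mathscr{V}$, and especially the divergence $\int_{t_0}^{\infty}g(t)/t^2\,dt=\infty$, must be invoked through a Levinson-type theorem. Verifying that the $\mathscr{V}$-hypotheses deliver exactly the growth and quasi-analyticity required (and carefully justifying the entire continuation of $Z_-$ and the transform-injectivity endgame) is where the real work lies; the splitting, the growth estimates, and the final Fourier-uniqueness step are comparatively routine.
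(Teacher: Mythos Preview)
The paper does not supply its own proof of this lemma: it is stated in the appendix with the attribution ``whose proof can be found in~\cite{Vul-59}'' and no further argument. There is therefore no in-paper proof to compare against.

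That said, your sketch is plausible and almost certainly in the spirit of the original source. The paper notes that Vul's result generalized an earlier theorem of Levitan--Me\u{\i}man~\cite{Levitan-Meiman} which handled the special case $\xi(t)=t^{\kappa}$ with $0<\kappa\le 2$; that classical line of argument is exactly the complex-analytic one you outline (entire extension via the $\cosh$-kernel, boundedness on the axes, Phragm\'en--Lindel\"of in each quadrant, Liouville, then Fourier/Laplace injectivity). You have also correctly located the one genuinely nontrivial step: in a right angle the Phragm\'en--Lindel\"of threshold is order two, and the examples $\xi(t)=t(at+b)$ and $\xi(t)=t(at+b)\ln(t+1)$ from the paper sit precisely at (or infinitesimally above) that threshold, so the conclusion requires the full force of the divergence condition $\int_{t_0}^{\infty}g(t)t^{-2}\,dt=\infty$ in the definition of~$\mathscr{V}$ via a refined Levinson-type theorem. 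Making that step precise---identifying and verifying the exact hypothesis of the appropriate Phragm\'en--Lindel\"of/quasi-analyticity theorem from the $\mathscr{V}$-axioms---is indeed where the substance of~\cite{Vul-59} lies, and your proposal would need to either carry that out or cite it. The remaining steps (the splitting at $\lambda=0$, the growth bound $|\Phi(a+ib)|\le 2Ce^{\xi(|a|)}$, and the transform-injectivity endgame) are, as you say, routine.
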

This result of~\cite{Vul-59} generalized an earlier result of~\cite{Levitan-Meiman}, where instead of the assumption $\xi\in\mathscr{V}$ in~(\ref{E:vul-rep-2}), a special case $\xi(t)=t^{\kappa}$, $0<\kappa\leq 2$, was considered.

%%%%%%%%%%%%%%%%%%%%%%%%%%%%%%

\end{document}